\newcommand{\E}{{\bf{E}}}
\newcommand{\PP}{{\bf{P}}}
\newtheorem{tm}{Theorem}
\newtheorem{lem}{Lemma}
\newtheorem{rem}{Remark}
\begin{document}

\bibliographystyle{plain}
\parindent=0pt
\centerline{\LARGE \bfseries 
Degree-degree distribution  
in a power law}
\centerline{\LARGE \bfseries random intersection graph with clustering}

\par\vskip 3.5em

\centerline{Mindaugas  Bloznelis}

\par\vskip 3.5em

\centerline{Faculty of mathematics and informatics,}
\centerline{ Vilnius university, 
03225 Vilnius, Lithuania}


\bigskip





\begin{abstract}
The bivariate distribution of degrees  
of adjacent vertices, degree-degree distribution,  
is an important network characteristic 
defining
the 
statistical dependencies between degrees of adjacent vertices.
We show the asymptotic degree-degree distribution  
of a 
sparse inhomogeneous random intersection graph and discuss its relation to the 
clustering and power law properties of the graph.
\par
\end{abstract}

\smallskip
{\bfseries key words:}  
degree-degree distribution,  power law, 
clustering coefficient, random intersection graph, affiliation network.

\par\vskip 2.5em


\par\vskip 2.5em

\section{Introduction}

Correlations between degrees of adjacent vertices influence many 
network properties 
including the component structure, epidemic spreading, random walk performance,  network robustness, 
etc., see 
\cite{AvratchenkovMarkovich2014},
 \cite{BogunaPSvespignani_2003}, \cite{HofstadLitvak2014}, \cite{Newman2002},
\cite{Newman2003} and references therein. The correlations
are defined by the degree-degree 
distribution, i.e., the bivariate distribution of
degrees of endpoints of a randomly chosen  edge. 
In this paper we present an analytic study of the
degree-degree distribution
in a mathematically tractable random graph model of an affiliation network 
possessing tunable power law degree distribution and 
 clustering coefficient.
 Our study is motivated  by 
the interest in
 tracing the relation between
the degree-degree distribution 
and  clustering properties in a power law network. 

\medskip

{\bf Affiliation network and random intersection graph.}
An affiliation network defines adjacency relations between actors by using
an auxiliary set of attributes. Let $V$ denote the set of actors 
(nodes of the network)
and $W$ denote the auxiliary set of attributes. Every actor $v\in V$ is 
prescribed a collection $S_v\subset W$ of attributes and two actors $u, v\in V$
are declared adjacent in the network if they share some common attributes.
For example one may interpret elements of $W$ as weights and declare two actors 
adjacent whenever the total weight of shared attributes is above some threshold 
value. Here we consider the simplest case, where $u,v\in V$ are called adjacent 
whenever they share at least one common attribute, i.e., 
$S_u\cap S_v\not=\emptyset$. Two popular 
examples of real affiliation networks are the 
film actor network, where two actors are declared adjacent if they 
have played in the same movie,
and the collaboration network, where two scientists are declared adjacent if 
they have coauthored a publication.

A plausible model of a large affiliation network is obtained by prescribing the 
collections of  attributes to actors at random.
 In order to model the  heterogeneity of human activity, 
every actor $v_j\in V$ is prescribed a random 
weight $y_j$ reflecting their activity. Similarly, 
a random  weight $x_i$ is prescribed 
to each attribute $w_i\in W$ to model its attractiveness. 
Now an attribute $w_i\in W$ is included in the collection $S_{v_j}$ at random 
and with probability proportional to the attractiveness $x_i$ and activity $y_j$ (cf., \cite{karonski1999},
\cite{NewmanSW2001}).  
In this way we obtain a random graph on the vertex set $V$ 
sometimes called
the inhomogeneous random intersection graph, see  
\cite{BloznelisGJKR2015} and references therein.
Before giving a detailed definition of 
this random graph model we
mention a recent publication \cite{muchnik2013}, which 
argues 
convincingly 
that in some social networks the  'heavy-tailed degree distribution 
is causally determined by 
similarly skewed
distribution of human activity'. The empirical evidence reported in 
\cite{muchnik2013}
 suggests that the inhomogeneous
random intersection graph can be considered as 
a realistic model of a power law  affiliation network.

{\bf  Rigorous model.}
Let $X_1,\dots, X_m, Y_1,\dots, Y_n$ be independent non-negative random 
variables such that
each $X_i$ has the  probability distribution $P_1$ and each $Y_j$ has 
the probability 
distribution
$P_2$. Given realized values $X=\{X_i\}_{i=1}^m$ and $Y=\{Y_j\}_{j=1}^n$
we define the random bipartite graph $H_{X,Y}$ with the bipartition 
$V\cup W$, where $V=\{v_1,\dots, v_n\}$ and  $W=\{w_1,\dots, w_m\}$. 
Every  pair $\{w_i,v_j\}$ is linked in $H_{X,Y}$
with probability
\begin{displaymath}
p_{ij}=\min\{1, \lambda_{ij}\},
\qquad
{\rm{where}}
\quad 
\lambda_{ij}=\frac{X_iY_j}{\sqrt{nm}},
\end{displaymath}
 independently 
of the other  pairs $\{w,v\}\in W\times V$.
For large $n$ and $m$, we typically have $\lambda_{ij}<1$ so that
the probability $p_{ij}=\lambda_{ij}$  is proportional to the ``activity'' $Y_j$ of $v_j$ and the 
``attractiveness'' $X_i$ of $w_i$.
The 
 inhomogeneous random intersection graph  $G=G(P_1,P_2, n,m)$ defines 
the adjacency 
relation on the  vertex 
set $V$: vertices $u,v\in V$ are declared adjacent (denoted $u\sim v$)
whenever
$u$ and $v$ have a common neighbor in $H_{X,Y}$. We call this neighbor a 
witness of the adjacency relation $u\sim v$.

\quad
The random graph $G$ has several features that make it a convenient 
theoretical model of a real complex network.
Firstly, the statistical dependence of   neighboring adjacency relations
in $G$ mimics that of real affiliation networks.
In particular, $G$ admits a tunable clustering coefficient: 
For  $m/n\to\beta\in (0,+\infty)$ as $m,n\to+\infty$, we have, 
see \cite{BloznelisKurauskas2016},
\begin{equation}\label{2014-10-20+1}
 {\bf P}(v_1\sim v_2|v_1\sim v_3,\, v_2\sim v_3)=
\frac{\kappa}{ \kappa+\sqrt{\beta}}
+
o(1).
\end{equation}
 Here  $\kappa:=b_1b_2^{-1}a_3a_2^{-2}$ and $a_i={\bf E} X_1^i$, $b_j={\bf E} Y_1^j$. Secondly, 
an important feature of the model is its ability to produce 
a rich class of (asymptotic) degree distributions including 
power laws. Let
$d(v)$ denote the degree of a vertex $v\in V$ in $G$.
We note that, by the symmetry,  random variables  $d(v_1),\dots, d(v_n)$ have 
the 
same probability distribution. 
The following result
 of 
\cite{BloznelisDamarackas2013}
describes the asymptotic distribution of $d(v_1)$ as $n,m\to+\infty$.

\begin{tm}\label{T1}
 Let $m,n\to\infty$. 

(i) Assume that $m/n\to \beta$ for some $\beta\in (0,+\infty)$.
 Suppose  that ${\bf E} X_1^2<\infty$ and ${\bf E} Y_1<\infty$. 
Then  $d(v_1)$ converges in distribution to the random variable
\begin{equation}\label{d*1}
d_*=\sum_{j=1}^{\Lambda_1}\tau_j, 
\end{equation}
where $\tau_1,\tau_2,\dots$ are independent 
and identically distributed random variables 
independent of the random variable $\Lambda_1$.
They are distributed as follows. For $r=0,1,2,\dots$, we have
\begin{equation}\label{d*1++}
{\bf P}(\tau_1=r)=\frac{r+1}{{\bf E}\Lambda_0}{\bf P}(\Lambda_0=r+1)
\qquad
{\rm{and}}
\qquad
{\bf P}(\Lambda_i=r)={\bf E} \,e^{-\lambda_i}\frac{\lambda_i^r}{r!},
\qquad
i=0,1.
\end{equation}
Here $\lambda_0=X_1b_1\beta^{-1/2}$ and $\lambda_1=Y_1a_1\beta^{1/2}$.

(ii) Assume that $m/n\to+\infty$.
 Suppose that  ${\bf E} X_1^2<\infty$ and ${\bf E} Y_1<\infty$. 
Then  $d(v_1)$ converges in distribution to a random variable
$\Lambda_3$ having the probability distribution
\begin{equation}\label{d*2}
{\bf P}(\Lambda_3=r)={\bf E} e^{-\lambda_3}\frac{\lambda_3^r}{r!},
\qquad
r=0,1,\dots.
\end{equation}
Here $\lambda_3=Y_1a_2b_1$.

(iii) Assume that $m/n\to 0$. 
Suppose that  that ${\bf E} X_1<\infty$. Then ${\bf P}(d(v_1)=0)=1-o(1)$.
\end{tm}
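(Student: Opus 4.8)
The plan is to condition on everything pertaining to $v_1$ in the bipartite graph $H_{X,Y}$: its weight $Y_1$ and the set of attributes it collects, $I_1=\{i\in[m]:\{w_i,v_1\}\in H_{X,Y}\}$, together with the weights $\{X_i\}_{i\in I_1}$. The decisive observation is that, conditionally on these data, the events $\{v_j\sim v_1\}$, $j=2,\dots,n$, are independent, since each amounts to $v_j$ placing at least one bipartite edge into $I_1$ and the edges emanating from distinct $v_j$ are independent. Thus $d(v_1)=\sum_{j=2}^n B_j$ is a sum of independent Bernoulli variables $B_j=\mathbf{1}[v_j\sim v_1]$ with parameters $r_j$, and each neighbour is counted exactly once, so no inclusion--exclusion over shared attributes is needed. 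By Le Cam's Poisson approximation the conditional law of $d(v_1)$ lies within total variation $\sum_j r_j^2$ of $\mathrm{Poisson}(R)$, $R=\sum_{j=2}^n r_j$; since $r_j\le\sum_{i\in I_1}\E_{Y_j}\min\{1,X_iY_j/\sqrt{nm}\}\le b_1(\sum_{i\in I_1}X_i)/\sqrt{nm}$ we have $\max_j r_j\to0$ while $R$ remains bounded in probability, so this error $\le\max_j r_j\cdot R\to0$. Hence $d(v_1)$ is asymptotically mixed Poisson with the random mean $R$.

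It remains to identify the limits of $R$ and of $I_1$. Expanding $r_j=\E_{Y_j}[1-\prod_{i\in I_1}(1-\min\{1,X_iY_j/\sqrt{nm}\})]$, the product corrections are of smaller order and $r_j\sim b_1(\sum_{i\in I_1}X_i)/\sqrt{nm}$, so $R\sim\sqrt{n/m}\,b_1\sum_{i\in I_1}X_i$. For the attribute set, $|I_1|=\sum_{i=1}^m\mathbf{1}[\{w_i,v_1\}\in H_{X,Y}]$ is a sum of independent Bernoulli variables with parameters $\min\{1,X_iY_1/\sqrt{nm}\}$, of mean $m\E\min\{1,X_1y/\sqrt{nm}\}\sim\sqrt{m/n}\,a_1 y$ under $Y_1=y$; moreover $\PP(w_i\in I_1)\propto X_i$ to leading order, so the surviving attributes carry size-biased weights. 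Throughout, the truncation $\min\{1,\cdot\}$ is absorbed by dominated convergence; every first-order quantity is linear in each weight, so $\E Y_1<\infty$ suffices, while $\E X_1^2<\infty$ enters through the second moment $a_2=\E X_1^2$ governing the mean $\E\hat X_1=a_2/a_1$ of a size-biased weight and the parameter $\lambda_3=Y_1a_2b_1$.

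For part (i), where $m/n\to\beta\in(0,\infty)$, both scales are finite: $|I_1|\Rightarrow\mathrm{Poisson}(\lambda_1)$ with $\lambda_1=a_1y\sqrt\beta$, a Poisson number of attributes carrying independent size-biased marks, and $R\to\sum_{i\in I_1}\lambda_0^{(i)}$ with $\lambda_0^{(i)}=X_ib_1\beta^{-1/2}$. Hence, given $I_1$, $d(v_1)$ is asymptotically $\mathrm{Poisson}(\sum_{i\in I_1}\lambda_0^{(i)})$, which by superposition of independent Poisson variables equals in law $\sum_{i\in I_1}\xi_i$, $\xi_i\sim\mathrm{Poisson}(\lambda_0^{(i)})$. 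A direct computation identifies the law of $\xi_i$ with \eqref{d*1++}: with $\lambda_0=X_1b_1\beta^{-1/2}$, $\PP(\xi_i=r)=\frac1{a_1}\E[X_1e^{-\lambda_0}\lambda_0^r/r!]=\frac{r+1}{\E\Lambda_0}\PP(\Lambda_0=r+1)=\PP(\tau_1=r)$, the factor $X_1/a_1$ being the size-biasing. Marginalising over the $\mathrm{Poisson}(\lambda_1)$ number of attributes and over $Y_1$ (which turns $\lambda_1=Y_1a_1\sqrt\beta$ into the mixed Poisson $\Lambda_1$) gives $d(v_1)\Rightarrow\sum_{i=1}^{\Lambda_1}\tau_i=d_*$, with the $\tau_i$ independent of $\Lambda_1$ because their law does not involve $Y_1$. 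For part (ii), where $m/n\to\infty$, one has $\lambda_0^{(i)}\to0$ while $|I_1|\to\infty$, and now $R$ concentrates: $R\sim\sqrt{n/m}\,b_1\sum_{i\in I_1}X_i$ with $\sum_{i\in I_1}X_i$ a sum of $m\to\infty$ independent small terms obeying a law of large numbers, $\E[\sum_{i\in I_1}X_i\mid Y_1=y]\sim\sqrt{m/n}\,a_2 y$, so $R\to_{\PP}a_2b_1y$. The limiting mixed Poisson then has mean $\lambda_3=Y_1a_2b_1$, i.e.\ $d(v_1)\Rightarrow\Lambda_3$ of \eqref{d*2}.

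Part (iii) is a first-moment bound: $d(v_1)\ge1$ forces $I_1\ne\emptyset$, so $\PP(d(v_1)\ge1)\le\E|I_1|$ and, conditionally on $Y_1=y$, $\E[|I_1|\mid Y_1=y]=m\E\min\{1,X_1y/\sqrt{nm}\}\le\sqrt{m/n}\,a_1 y\to0$ for each fixed $y$; since the conditional probability of $\{d(v_1)\ge1\}$ is also at most $1$ and $Y_1<\infty$ almost surely, dominated convergence gives $\PP(d(v_1)\ge1)\to0$ using only $\E X_1<\infty$. The main obstacle, common to parts (i) and (ii), is the simultaneous quantitative control of the size-biased attribute set $I_1$ \emph{and} of the conditional Poisson parameter $R$ --- a non-degenerate mixture in (i), a concentrated constant in (ii) --- under the minimal moments $\E X_1^2<\infty$, $\E Y_1<\infty$, where the truncation $\min\{1,\cdot\}$ must be handled by dominated convergence rather than by crude second-moment estimates. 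I expect the concentration of $R$ in part (ii), equivalently the law of large numbers for $\sum_{i\in I_1}X_i$ under only a finite second moment, to be the most delicate point.
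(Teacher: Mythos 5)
First, a point of reference: this paper does not actually prove Theorem \ref{T1} --- it is imported verbatim from \cite{BloznelisDamarackas2013} (``The following result of [4] will be used below\dots''), so the only in-paper material to compare against is the Poissonization machinery of Appendix A (LeCam's lemma, sequential replacement of indicators by Poisson variables, total-variation couplings), which is also the style of the cited source. Your decomposition is genuinely different and in one respect cleaner: by conditioning on $Y_1$, on the attribute set $I_1$ of $v_1$ and on the weights $\{X_i\}_{i\in I_1}$, the neighbour indicators $B_j$, $j\ge 2$, become \emph{exactly} conditionally independent Bernoulli variables, so one application of LeCam's lemma yields a mixed Poisson approximation with random mean $R$, and neighbours reached through several shared attributes are never over-counted. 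The paper's machinery instead works with the pair counts $L_k=\sum_i {\mathbb I}_{ik}u_i$, which do over-count, and must separately kill the bad events ${\cal B}_{lj}$. Your size-biasing identity is correct: with $\lambda_0=X_1b_1\beta^{-1/2}$ one indeed has $\frac{1}{a_1}\E\bigl(X_1e^{-\lambda_0}\lambda_0^r/r!\bigr)=\frac{r+1}{\E\Lambda_0}\PP(\Lambda_0=r+1)$, your LeCam error bound $\sum_j r_j^2\le b_1^2T^2/m$ with $T=\sum_{i\in I_1}X_i$ is right in both regimes, and part (iii) is complete as written.

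However, two steps carry the real weight and are asserted rather than proved. (1) In part (i) you need the distributional limit of $T$ (equivalently, of the marked attribute set): ``$\PP(w_i\in I_1)\propto X_i$ to leading order, so the weights are size-biased'' is the correct heuristic, but a proof requires, e.g., the conditional Laplace transform
\begin{equation}\nonumber
\E\bigl(e^{-sT}\,\big|\,Y_1=y\bigr)
=\prod_{i=1}^m\Bigl(1-\E\bigl[\min\{1,X_1y/\sqrt{nm}\}(1-e^{-sX_1})\bigr]\Bigr)
\to
\exp\Bigl\{-y\sqrt{\beta}\,\E\bigl[X_1(1-e^{-sX_1})\bigr]\Bigr\},
\end{equation}
which is precisely the compound Poisson transform with a Poisson$(a_1y\sqrt{\beta})$ number of i.i.d.\ size-biased jumps; this is standard and fillable. (2) The gap you yourself flag in part (ii) is genuine as written: the naive bound gives $\Var\bigl(\sqrt{n/m}\,T\,\big|\,Y_1=y\bigr)\le n\,\E\bigl[X_1^2\min\{1,X_1y/\sqrt{nm}\}\bigr]$, and this is $o(1)$ only if $\E X_1^3<\infty$, which is not assumed. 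Under the minimal condition $\E X_1^2<\infty$ you must truncate: split the sum over $\{X_i\le K\}$ and $\{X_i>K\}$; the tail part has conditional expectation at most $y\,\E\bigl[X_1^2{\mathbb I}_{\{X_1>K\}}\bigr]$ \emph{uniformly in} $n$ (small for large $K$), while the truncated part has conditional variance at most $K^2a_1y\sqrt{n/m}\to 0$. The same truncation-plus-dominated-convergence care is needed where you replace $r_j$ by $b_1\sum_{i\in I_1}X_i/\sqrt{nm}$, since only $\E Y_1<\infty$ is available. With these insertions your argument closes, under exactly the stated moment assumptions.
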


We briefly explain the origin of $\Lambda_i$, $i=0,1,3$.
The random variables $\Lambda_0$ and $\Lambda_1$ are limits 
(in distribution) of
the degrees in the bipartite graph $H_{X,Y}$ of $w_1$ and $v_1$ respectively.
 That is,
  the number of attributes
linked to $v_1$  converges in distribution to
$\Lambda_1$ and the number of vertices linked to $w_1$ 
converges in distribution to $\Lambda_0$. Furthermore, the size-biased 
random variable $\tau_j$ counts the neighbors in $G$ of $v_1$ 
witnessed by an attribute $w_j$, given the event 
that $w_j$ is linked to $v_1$. As for  $\Lambda_3$ it 
refers to the case
where $m/n\to +\infty$. Here the number of attributes linked to $v_1$
 grows to infinity (at the rate
$\Theta(\sqrt{m/n})$), while the number of vertices linked to any given attribute
vanishes (an attribute produces a single link 
with a small probability of order $\Theta(\sqrt{n/m})$ or
has no link at all).  Thus among  many attributes linked to $v_1$
only a few
contribute to the degree $d(v_1)$ by witnessing a single neighbor each.
The number of neighbors converges in distribution to a 
mixed Poisson random variable
$\Lambda_3$.

Using the fact that a Poisson random variable 
is highly concentrated around its mean one can 
show that for a power law distribution 
${\bf P}(\lambda_i>r)\approx c_i\, r^{-\varkappa_i}$, 
with some $c_i,\varkappa_i>0$, we have
${\bf P}(\Lambda_i>r)\approx c_ir^{-\varkappa_i}$, for $i=0,1,3$, see
Lemma \ref{L3}. 
Here  and below 
for real sequences 
$\{t_r\}_{r\ge 1}$ and $\{s_r\}_{r\ge 1}$
we denote $t_r \approx s_r$ whenever $t_r/s_r\to 1$ as $r\to+\infty$. 
Furthermore, the tail ${\bf P}(d^*>r)$ of a randomly stopped sum $d_*$ 
is as heavy as the heavier one of $Y_1$ and $\tau_1$, 
see, e.g., \cite{AleskevicieneLeipusSiaulys}. Hence, choosing a power law 
weights $X$ and $Y$ we obtain a power law asymptotic degree distributions, 
namely, the distributions of $d^*$ and $\Lambda_3$.

In what follows we will focus on local probabilities.
Given $c>0$ and $\varkappa>1$, we say  that 
 a non-negative random variable $Z$ 
has the power law property 
${\cal P}_{c,\varkappa}$ (denoted $Z\in {\cal P}_{c,\varkappa}$) if
either $Z$ is integer valued and satisfies ${\bf P}(Z=r)\approx cr^{-\varkappa}$,
or $Z$  is absolutely continuous with density $f_Z$ 
satisfying $f_Z(t)=(c+o(1))t^{-\varkappa}$ as $t\to+\infty$.

\begin{rem}\label{R0}
Let $c, x>0$.  Let $r\to+\infty$.

(i) Let $a>0$ and $\varkappa>3$. Assume that $\E e^{aY_1}<\infty$  
and  $X_1\in {\cal P}_{c,\varkappa}$.
 Then 
$\PP(d_*=r)\approx 
cb_1^{\varkappa-1}\beta^{(3-\varkappa)/2}r^{1-\varkappa}$.

(ii) Let $\varkappa>2$. Assume that  
 $Y_1\in{\cal P}_{c,\varkappa}$ and $\PP(X_1=x)=1$.
 Then 
$\PP(d_*=r)\approx c(x^2b_1)^{\varkappa-1}r^{-\varkappa}$.
\end{rem}

To show (i) we exploit the  power law properties 
   of the local probabilities of 
randomly stopped sums, like $d_*$, in the  case where the 
summands are heavy tailed
and the  number of summands has a light tail 
(see, e.g., Theorem 4.30 of \cite{Foss}).
Unfortunately we are not aware of rigorous 
results establishing  power law properties of the 
local probabilities of randomly stopped sums, like $d_*$, 
in the case where 
 the {\it number of 
summands is heavy tailed}.

\medskip

{\bf Degree-degree distribution.} 
We are interested in the bivariate distribution
of degrees of adjacent vertices. Denote $d_1=d(v_1)$, $d_2=d(v_2)$ and let 
\begin{equation}\label{2014-11-06+2}
 p(k_1,k_2)=\PP(d_1=k_1+1, d_2=k_2+1\,|\,v_1\sim v_2), 
\qquad
k_1,k_2=0,1,\dots,
\end{equation}
denote the probabilities defining the conditional 
bivariate distribution of the {\it ordered} 
pair $(d_1,d_2)$, given the event
that vertices $v_1$ and $v_2$ are adjacent. Let $(u^*,v^*)$ 
be an ordered pair of distinct vertices chosen uniformly at random from $V$.
For the probability distribution of graph 
$G$ is invariant 
under permutations of its vertices, we have
\begin{displaymath}
p(k_1,k_2)=p(k_2,k_1)=\PP\bigl(d(u^*)=k_1+1,d(v^*)=k_2+1|u^*\sim v^*\bigr).
\end{displaymath}

In Theorem \ref{T2} below we 
show a first order asymptotics of $p(k_1,k_2)$ as $n,m\to+\infty$. 
Before formulating the theorem we introduce some notation.
We remark that 
 $d_*$ defined by (\ref{d*1}) depends on $Y_1$. 
By conditioning on the event $\{Y_1=y\}$ we obtain another random 
variable, denoted $d^*_{y}$,
which has the compound Poisson distribution
\begin{displaymath}
 \PP(d^*_y=r)=\PP(d_*=r|Y_1=y)=\PP\Bigl(\sum_{j=1}^N\tau_j=r\Bigr),
\qquad
r=0,1,\dots.
\end{displaymath}
Here $N=N_y$ denotes a Poisson random variable which is independent
of the iid sequence $\{\tau_j\}_{j\ge 1}$ and has mean $\E N_y=ya_1\beta^{1/2}$, 
$y\ge 0$. 
Given integers $k_1,k_2,r\ge 0$, denote
\begin{eqnarray}\label{refereeformula}
&&
 q_r=\E \bigl(Y_1\PP(d^*_{Y_1}=r|Y_1)\bigr)
=
\E \bigl(Y_1\PP(d_*=r|Y_1)\bigr), 
\\
\nonumber
&&
p_{\beta}(k_1,k_2)=\frac{\beta}{b_1^4a_2}
\sum_{r=0}^{k_1\wedge k_2}
(r+1)(r+2)
\PP(\Lambda_0=r+2)
q_{k_1-r}q_{k_2-r},
\\
\nonumber
&&
{\tilde p}(r)
=
r\PP(\Lambda_3=r)\bigl(\E \Lambda_3\bigr)^{-1},
\qquad
p_{\infty}(k_1,k_2)
=
{\tilde p}(k_1+1){\tilde p}(k_2+1).
\end{eqnarray}
Our main result is the following theorem.

\begin{tm}
  \label{T2}
 Let $m,n\to\infty$. Suppose that  $\E X_1^2<\infty$ and $\E Y_1<\infty$.

(i) Assume that $m/n\to \beta$ for some $\beta\in (0,+\infty)$. 
Then  for every  $k_1,k_2\ge 0$ we have
\begin{equation}\label{2014-10-05+4}
 p(k_1,k_2)
=
p_{\beta}(k_1,k_2)+o(1),
\end{equation}

(ii) Assume that $m/n\to+\infty$.
 Then  for every  $k_1,k_2\ge 0$ we have
\begin{equation}\label{2014-10-09+1}
 p(k_1,k_2)
=
p_{\infty}(k_1,k_2)+o(1).
\end{equation}
\end{tm}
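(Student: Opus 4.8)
The plan is to write $p(k_1,k_2)$ as the ratio of a joint probability (numerator) and the edge probability $\PP(v_1\sim v_2)$ (denominator), and to analyse both by conditioning on the weights $X=\{X_i\}$ and $Y=\{Y_j\}$, which makes all bipartite links $\{w_i,v_j\}$ mutually independent. Because $\lambda_{ij}=X_iY_j/\sqrt{nm}\to 0$ for fixed weights, I would replace each $p_{ij}=\min\{1,\lambda_{ij}\}$ by $\lambda_{ij}$ and the binomial attachment counts by their Poisson limits, checking that these replacements cost only $o(1/n)$ in every quantity below. A first--moment computation then gives $\PP(v_1\sim v_2)=\tfrac{a_2b_1^2}{n}(1+o(1))$: the expected number of common neighbours (``witnesses'') of $v_1,v_2$ is $\sum_i\E\,p_{i1}p_{i2}\approx n^{-1}a_2\E[Y_1Y_2]=n^{-1}a_2b_1^2$, whereas the probability of two or more witnesses is $O(n^{-2})$. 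Thus, up to $o(1/n)$, the event $\{v_1\sim v_2\}$ coincides with the event that there is a \emph{unique} witness $w_i$.

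Next I would decompose the numerator according to this unique witness $w_i$ and the number $r$ of vertices (other than $v_1,v_2$) attached to $w_i$; these $r$ vertices are exactly the common neighbours that the witness creates. Writing $d(v_1)=1+r+s_1$ and $d(v_2)=1+r+s_2$, where $s_1$ (resp. $s_2$) counts the neighbours of $v_1$ (resp. $v_2$) produced by its \emph{other} attributes, the task is to show that, conditionally on the weights and on the witness, the three counts $r,s_1,s_2$ are asymptotically independent and that $s_1,s_2$ inherit the limiting laws of the degree in Theorem \ref{T1}. Concretely, in the regime $m/n\to\beta$ the witness (of weight $X_i$) creates a Poisson number $r$ of common neighbours with mean $\mu_i=X_ib_1\beta^{-1/2}$, while $s_1$ and $s_2$ have the conditional laws of $d^*_{Y_1}$ and $d^*_{Y_2}$. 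The asymptotic independence is the crux: it requires ruling out, at cost $o(1/n)$, the coincidence events (a vertex attached to $v_1$ through both the witness and another attribute, a second common neighbour of $v_1,v_2$ through a non-witness attribute, or a neighbour of $v_1$ equal to a neighbour of $v_2$), each of which carries an extra two-path and hence an extra factor $O(1/n)$.

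Granting this, the numerator is, to leading order,
\begin{displaymath}
\E_{X,Y}\Bigl[\sum_{i=1}^m \lambda_{i1}\lambda_{i2}\sum_{r\ge 0}\frac{\mu_i^r e^{-\mu_i}}{r!}\,\PP(d^*_{Y_1}=k_1-r)\,\PP(d^*_{Y_2}=k_2-r)\Bigr].
\end{displaymath}
Summing over $i$ and integrating over the iid weights $X_i$, the elementary identity $X_1^2\mu^r/r!=(\beta/b_1^2)(r+1)(r+2)\,\mu^{r+2}/(r+2)!$ with $\mu=X_1b_1\beta^{-1/2}$ turns $\E[X_1^2\mu^r e^{-\mu}/r!]$ into $(\beta/b_1^2)(r+1)(r+2)\PP(\Lambda_0=r+2)$, and the expectation over $Y_1,Y_2$ produces the factors $q_{k_1-r}$ and $q_{k_2-r}$. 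Dividing by the denominator $a_2b_1^2/n$ gives exactly $p_\beta(k_1,k_2)$, proving (i). For part (ii) the only change is that $\mu_i=X_ib_1\beta^{-1/2}\to 0$, so the witness attracts no further common neighbour and only $r=0$ survives, while the other degrees $s_1,s_2$ converge to independent copies of $\Lambda_3$; the same bookkeeping, together with the size-biasing identity $\E[\,Y_1\PP(\Lambda_3=k_1\mid Y_1)\,]=b_1\tilde p(k_1+1)$ for the Poisson law $\Lambda_3$, turns the numerator into $\tfrac{a_2b_1^2}{n}\tilde p(k_1+1)\tilde p(k_2+1)$, and division by the denominator yields $p_\infty(k_1,k_2)$.

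The step I expect to be the main obstacle is the rigorous justification of the asymptotic conditional independence of $r,s_1,s_2$, together with the uniform control needed to interchange the limit with the summation over $r$ and the expectation over the weights. Since only $\E X_1^2<\infty$ and $\E Y_1<\infty$ are assumed, the weights are heavy-tailed, so both the Poisson approximations and the domination of the summands (uniformly in $m,n$) must be argued carefully; this is where the bulk of the technical work, paralleling the proof of Theorem \ref{T1}, will lie.
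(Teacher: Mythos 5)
Your proposal follows essentially the same route as the paper's proof: reduction of the conditioning event to a single witnessing attribute, decomposition over the number $r$ of common neighbours created by that witness, asymptotic conditional independence (given the weights) of $r$ and the two individual-neighbour counts with limit laws $\Lambda_0$, $d^*_{Y_1}$, $d^*_{Y_2}$, and the identical algebraic identities $\E\bigl[X_1^2\mu^r e^{-\mu}/r!\bigr]=(\beta/b_1^2)(r+1)(r+2)\PP(\Lambda_0=r+2)$ for part (i) and the size-biasing identity for part (ii). The one caution is that the technical core you defer is genuinely where the moment conditions bite: for instance your claim that two or more witnesses cost $O(n^{-2})$ would, if done by a naive second-moment bound, require $\E Y_1^2<\infty$, which is not assumed, and the paper circumvents this (and the analogous steps for the coincidence events and the Poisson approximation) by truncation on $\{Y_i<\delta\sqrt{n}\}$, conditional $o(1)$ bounds, and dominated convergence, carried out in Step 1 and in the two lemmas of its Appendix A.
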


We note that the moment conditions $\E X_1^2<\infty$ and $\E Y_1<\infty$ of 
Theorem
\ref{T2}
are the minimal ones as the numbers 
$a_2=\E X_1^2$ and $b_1=\E Y_1$ enter (implicitly) 
both formulas (\ref{2014-10-05+4}) and (\ref{2014-10-09+1}).

\begin{rem}\label{R1}
In the case where $m/n\to+\infty$, the size biased probability distribution 
$\{{\tilde p}(r)\}_{r\ge 1}$
is the limiting distribution of  $d(v_1)$ 
conditioned on the event  $v_1\sim v_2$, i.e.,
\begin{equation}\label{2014-10-09+1++}
\PP(d(v_1)=r|v_1\sim v_2)
=
 {\tilde p}(r)
+
o(1),
\qquad
r=1,2,\dots.
\end{equation}
\end{rem}


Let us examine how the clustering property (presence of non-vanishing  
clustering coefficient)
affects the structure of the  asymptotic degree-degree distribution.
Firstly we consider the case $m/n\to+\infty$, where the clustering coefficient
vanishes (cf. (\ref{2014-10-20+1})). In this case one may expect that
the statistical dependence between neighboring 
adjacency relations fades away as $m,n\to+\infty$  and  that 
the
degrees of adjacent vertices are 
asymptotically independent.
This is indeed the case as
(\ref{2014-10-09+1}) and (\ref{2014-10-09+1++}) imply that 
\begin{displaymath}
p(k_1,k_2)
=
\PP(d(v_1)=k_1+1,|\,v_1\sim v_2)
\PP(d(v_2)=k_2+1\,|\,v_1\sim v_2)
+o(1).
\end{displaymath}
For  $m/n\to \beta\in (0,+\infty)$  the random graph $G$ admits 
a non-vanishing clustering coefficient. Now, the statistical 
dependence of  neighboring 
adjacency relations persists as $n,m\to+\infty$, and the asymptotic 
bivariate degree-degree distribution  is
not a product of marginal asymptotic degree distributions, see 
(\ref{2014-10-05+4}).
One can show that in this case the degrees of adjacent vertices are 
positively correlated and  $G$ admits a non-vanishing 
positive Newman's assortativity 
coefficient, provided that the vertex degree distribution has a finite third 
moment
(cf. \cite{BloznelisJKR2013}). 
  
 Several examples of  power law degree-degree distributions
are considered in Theorem \ref{C1} below.

\begin{tm}\label{C1} Let $a, \beta, c,x>0$.

(i) Assume that $\E X_1^2<\infty$ and   
 $Y_1\in {\cal P}_{c,\varkappa}$, for some $\varkappa>2$.
 Then for $k_1,k_2\to+\infty$ we have
\begin{equation}\label{2014-11-10+++1}
 p_{\infty}(k_1,k_2)
=
(1+o(1))
c^2 a_2^{2\varkappa-4}b_1^{2\varkappa-6} 
(k_1k_2)^{1-\varkappa}.
\end{equation}

(ii) Assume that $\E e^{aY_1}<\infty$ and 
 $X_1\in {\cal P}_{c,\varkappa}$, for some $\varkappa>3$.
 Let  $k_1,k_2\to+\infty$ so that $k_1\le k_2$. Suppose that
 either $k_2-k_1\to+\infty$ or
$k_2-k_1=k$, for an arbitrary, but  fixed integer $k\ge 0$. Then 
\begin{equation}\label{2014-11-10+++2}
p_{\beta}(k_1,k_2)
=
(1+o(1))\frac{\beta}{b_1^4a_2}
\times 
\begin{cases} 
b_1c^*_1c^*_2k_1^{2-\varkappa}(k_2-k_1)^{1-\varkappa} 
&\text{if \  \  $k_2-k_1\to +\infty$,}
\\
c^*_1c^*_{3,k}k_1^{2-\varkappa} & \text{if \ \  $k_2-k_1=k$.}
\end{cases}
\end{equation}
Here $c^*_1=c(b_1\beta^{-1/2})^{\varkappa-1}$, 
$c^*_2=cb_1^{\varkappa-2}b_2\beta^{(3-\varkappa)/2}$,
and
$c^*_{3,k}=\sum_{i\ge 0}q_{i}q_{k+i}$. 
Furthermore, we have $q_{r}\approx c_2^*r^{1-\varkappa}$. We recall that $q_i$ is 
defined in (\ref{refereeformula}).

(iii) 
Assume that $\PP(X_1=x)=1$ and $Y_1\in {\cal P}_{c,\varkappa}$,
 for some $\varkappa>2$.
For $k_1,k_2\to+\infty$ we have  
\begin{equation}\label{2014-11-10+++2+}
p_{\beta}(k_1,k_2)
=
(1+o(1))c^2x^{4\varkappa-8}b_1^{2\varkappa-6}
(k_1k_2)^{1-\varkappa}.
\end{equation}
\end{tm}

We remark, that example (iii) refers to the clustering regime 
($m/n\to\beta\in(0,+\infty)$), where neighboring adjacency
 relations
 are statistically dependent. 
 Hence $p_{\beta}(\cdot,\cdot)$ is not a product of marginal probabilities.
 An interesting fact is that for $k_1,k_2\to+\infty$ the tail of 
 $p_\beta(k_1,k_2)$ is {\it asymptotically} 
 a product of independent marginals. Here we observe a situation, where heavy tailed
weights of actors $Y_j$ define the power law tails of
 $p_\beta(\cdot,\cdot)$ and outperform the light weights of attributes $X_i$.

Our final remark is about the case  where $m/n\to 0$. 
By  Theorem \ref{T1}, in this case  the edges of a  sparse 
inhomogeneous  random intersection graph 
span 
a
subgraph on
$o(n)$ randomly selected vertices leaving the remaining $(1-o(1))n$ 
vertices
isolated. 
Consequently, the subgraph is relatively dense and we do not expect 
stochastically bounded degrees of endpoints of adjacent vertices.

{\bf  Related work.} 
The influence of degree-degree correlations on the network properties
have been studied by many authors, see,  e.g., \cite{BogunaPSvespignani_2003}, 
\cite{HofstadLitvak2014}, \cite{Newman2002},
\cite{Newman2003} and references therein. The asymptotic 
degree-degree distribution
in a preferential attachment random graph with tunable power law 
degree distribution
was shown in \cite{Grechnikov2011}. Our model and approach are much different.
To our best knowledge the present paper is the first attempt to trace
the relation between
the degree-degree distribution 
and the {\it clustering property} in a  power law  network. 
Connections between
Newman's assortativity coefficient and the clustering coefficient 
in  related random graph models have been discussed in  \cite{BloznelisJKR2013}.

The present paper complements, revises and extends the results of \cite{Bloznelis2015},  presented at the 12th Workshop on Algorithms
and Models for the Web Graph, WAW 2015. In particular, the factor $b_1$ is included in (\ref{2014-11-10+++2}). It was missing in the respective formula (7) of  \cite{Bloznelis2015}.


\section{Proof}
Here we prove Theorems \ref{T2}, \ref{C1} and Remarks \ref{R0}, \ref{R1}.  
Before the proof we collect some notation.
We will assume throughout the proof 
that $m=m(n)\to+\infty$ as $n\to+\infty$. The 
expressions $o(\cdot)$, $O(\cdot)$ will always refer to the case 
where $n\to+\infty$. We use the notation $o_P(\cdot)$ and $O_P(\cdot)$ consistently
with \cite{janson2001}.
Given two real sequences $\{a_n\}_{n\ge 1}$ and $\{b_n\}_{n\ge 1}$ we write
$a_n\simeq b_n$ to denote the fact that $(a_n-b_n)mn=o(1)$.
By ${\mathbb I}_{\cal A}$ we denote the indicator function of an event ${\cal A}$ and
${\bar {\mathbb I}}_{\cal A}=1-{\mathbb I}_{\cal A}={\mathbb I}_{\bar{\cal A}}$ 
denotes the indicator of the complement event ${\bar {\cal A}}$. 

In the proof we often use the following facts: For a random variable
$Z$ and a  sequence of events 
$\{{\cal K}_n\}_{n\ge 1}$
defined on the same probability 
space we have  that $\E |Z|<\infty$ and $\lim_n \PP({\cal K}_n)=0$ imply 
$\E ({\mathbb I}_{{\cal K}_n}Z)=o(1)$; for a sequence of random variables
$\{Z_n\}_{n\ge 1}$ the conditions $\exists C>0$ such that $|Z_n|\le C$ almost 
surely $\forall n$, and $Z_n=o_P(1)$ imply $\E |Z_n|=o(1)$.

We denote by $\PP^{\star}$ and $\E^{\star}$ 
(respectively ${\tilde \PP}$ and ${\tilde \E}$)
the conditional probability and expectation
given $X_1,Y_1,Y_2$ (respectively $X,Y$).
 For random variables $\xi,\zeta$ 
defined on the same probability space as $X,Y$ we denote by 
$d^{\star}_{TV}(\xi,\zeta)$ 
(respectively ${\tilde d}_{TV}(\xi,\zeta)$) 
the total variation distance between the 
conditional distributions of $\xi$ and $\zeta$ given $X_1,Y_1,Y_2$ 
(respectively $X,Y$).
Given a sequence of random variables $\{Z_n\}_{n\ge 1}$, 
defined on the same probability space as $Y_1,Y_2,X_1$,
the notation 
$Z_n=o_{P^{\star}}(1)$ means that, given any realized values $Y_1,Y_2,X_1$ and 
any $\varepsilon>0$, we have $\PP^{\star}(Z_n>\varepsilon)=o(1)$. We note that
for 
$\{Z_n\}_{n\ge 1}$  independent of $Y_1,Y_2, X_1$ we have 
$Z_n=o_{P^{\star}}(1)\Leftrightarrow Z_n=o_P(1)$.


The degree of a vertex $v_i\in V$ is denoted $d_i=d(v_i)$. 
The number of common neighbors of 
$v_i,v_j\in V$ is denoted $d_{ij}$.
For a vertex $v\in V$ and attribute $w\in W$ we denote by 
$\{w\to v\}$ the event that $v$ and $w$ are 
linked in $H$. 
Introduce the events 
\begin{equation}\nonumber
 {\cal A}_i=\{w_i\to v_1, w_i\to v_2\}, 
\qquad
1\le i\le m.
\end{equation}
We write for short  
\begin{displaymath}
{\mathbb I}_{ij}={\mathbb I}_{\{w_i\to v_j\}},
\qquad
{\bar {\mathbb I}}_{ij}=1-{\mathbb I}_{ij},
\qquad
{\bf I}_{ij}={\mathbb I}_{\{\lambda_{ij}\le 1\}},
\qquad
{\bar{\bf I}}_{ij}=1-{\bf I}_{ij}. 
\end{displaymath}
Let 
${\bf L}=(L_0,L_1,L_2)$ denote the random vector with marginal random variables
\begin{displaymath}
L_0=u_1,
\qquad
L_1=\sum_{2\le i\le m}{\mathbb I}_{i1}u_i,
\qquad
L_2=\sum_{2\le i\le m}{\mathbb I}_{i2}u_i,
\qquad
u_i=\sum_{3\le j\le n}{\mathbb I}_{ij},
\quad
1\le i\le m.
\end{displaymath}
Let  $\Lambda_i$, $0\le i\le 4$  denote  random variables
having  mixed Poisson distributions
\begin{equation}
 \label{2014-10-21+1}
\PP(\Lambda_i=r)=\E (e^{-\lambda_i}\lambda_i^r/r!),
\qquad
r=0,1,2,\dots,
\end{equation}
where 
\begin{displaymath}
 \lambda_0=X_1b_1\beta^{-1/2},
\quad
\lambda_1=Y_1a_1\beta^{1/2},
\quad
\lambda_2=Y_2a_1\beta^{1/2},
\quad
\lambda_3=Y_1a_2b_1,
\quad
\lambda_4=Y_2a_2b_1
\end{displaymath}
are random variables.
We assume that conditionally, given $Y_1,Y_2,X_1$, the random variables 
$\Lambda_i$, $0\le i\le 4$ 
are independent. 
Define random variables $d^*_{Y_1}=\sum_{i=1}^{\Lambda_1}\tau_i$ 
and $d^*_{Y_2}=\sum_{i=1}^{\Lambda_2}\tau'_i$. Here $\tau_i, \tau'_i, i\ge 1$ are 
independent and identically distributed random variables, which are independent 
of $Y_1,Y_2, X_1$   and have distribution (\ref{d*1++}).
Define the events
\begin{eqnarray}\nonumber
&&
{\cal U}_{k_1,k_2}=\{d_1=k_1+1, d_2=k_2+1\},
\qquad
{\cal U}_{r, r_1,r_2}^*=\{{\bf L}=(r, r_1,r_2)\},
\\
\nonumber
&&
{\cal U}_{r, r_1,r_2}^{**}=\{\Lambda_0=r, d^*_{Y_1}= r_1, d^*_{Y_2}=r_2\},
\qquad
{\cal U}_{r_1,r_2}^{***}=\{\Lambda_3=r_1, \Lambda_4=r_2\}.
\end{eqnarray}
Define random variables 
${\hat a}_k=m^{-1}\sum_{2\le i\le m}X_i^k$
and
${\hat b}_k=n^{-1}\sum_{3\le j\le n}Y_j^k$.


\begin{proof}[Proof of Theorem \ref{T2}]

{\it Proof of (i).} The intuition behind formula (\ref{2014-10-05+4}) is that with a high probability
the adjacency relation $v_1\sim v_2$ 
as well as all common neighbors of $v_1$ and $v_2$ are witnessed, 
 by the same common attribute
(all attributes having equal chances).
 Furthermore, conditionally on the event
that this attribute is $w_1$, and given $Y_1,Y_2, X_1$, 
we have that
the random variables $d_{12}$, $d_1-1-d_{12}=:d_1'$,
$d_2-1-d_{12}=:d_2'$  are asymptotically independent.
We note that $d'_1$ and
$d'_2$ count individual (not shared) neighbors of 
$v_1$ and $v_2$. The individual neighbors of $v_1$ (of $v_2$) are 
attracted 
by all attributes linked to $v_1$ (to $v_2$), but $w_1$, while
 the common neighbors are attracted by the attribute $w_1$. The 
 (conditional given $Y_1,Y_2,X_1$)
asymptotic 
independence of $d_{12}, d'_1, d'_2$
comes  from the fact that 
these random variables are mainly related to each 
other via 
average characteristics ${\hat a}_1$, ${\hat b}_1$ which are 
asymptotically
constant, by the law of large numbers. Now, using Theorem \ref{T1}
we identify limiting distributions of $d'_1, d'_2$.  Finally, the 
limiting distribution 
for the number of vertices
from $\{v_3,\dots, v_n\}$ attracted by $w_1$ is that  of 
$\Lambda_0$.

We briefly outline the proof.  
In the first step  we show that
\begin{equation}\label{2014-10-14+2}
p(k_1,k_2)
=
\frac{\PP({\cal U}_{k_1,k_2}\cap\{v_1\sim v_2\})}{\PP(v_1\sim v_2)}
=
\frac{\PP({\cal U}_{k_1,k_2}\cap(\cup_{i}{\cal A}_i))}{\PP(\cup_{i}{\cal A}_i))}
=
\frac{nm}{a_2b_1^2}\PP({\cal U}_{k_1,k_2}\cap{\cal A}_1)+o(1).
\end{equation}
Then using the total probability formula we
split 
\begin{equation}\label{2014-10-14+3}
\PP({\cal U}_{k_1,k_2}\cap{\cal A}_1)
=
\sum_{r=0}^{k_1\wedge k_2}\PP({\cal U}_{k_1,k_2}\cap\{d_{12}=r\}\cap {\cal A}_1).
\end{equation}
In the second step  we show that for every $r=0,1,\dots, k_1\wedge k_2$ 
\begin{equation}\label{2014-10-14+4}
 \PP({\cal U}_{k_1,k_2}\cap\{d_{12}=r\}\cap {\cal A}_1)
\simeq
\PP({\cal U}^*_{r, k_1-r,k_2-r}\cap {\cal A}_1).
\end{equation}
In the final step we show that for $r, r_1, r_2=0,1,2\dots$
\begin{equation}\label{2014-10-14+5}
\PP({\cal U}^*_{r, r_1,r_2}\cap {\cal A}_1)
\simeq
\E \bigl(\lambda_{11}\lambda_{12}\PP^{\star}({\cal U}^{**}_{r, r_1,r_2})\bigr).
\end{equation}
Now the simple identity
\begin{displaymath}
 nm\E \bigl(\lambda_{11}\lambda_{12} \PP^{\star}({\cal U}^{**}_{r,r_1,r_2})\bigr)
=
\frac{\beta}{b_1^2}(r+1)(r+2)\PP(\Lambda_0=r+2)q_{r_1}q_{r_2}
\end{displaymath}
completes the proof of  (\ref{2014-10-05+4}).
Finally, we remark that in order to prove the result under the minimal 
moment conditions $\E Y_1<\infty$ and $\E X^2_1<\infty$, we invoke, 
when necessary, a 
truncation argument, which makes our presentation some more involved.

{\it {Step 1.}} Here we prove (\ref{2014-10-14+2}). To this aim we show that
\begin{eqnarray}\nonumber
&&
 \PP({\cal U}_{k_1,k_2}\cap \{v_1\sim v_2\})
=
m\PP({\cal U}_{k_1,k_2}\cap{\cal A}_1)
+
o(n^{-1}),
\\
\label{2014-10-15+1}
&&
\PP(\{v_1\sim v_2\})
=
 n^{-1}a_2b_1^2
+
o(n^{-1}).
\end{eqnarray}
Since 
 $\PP({\cal A}_i)\le\E \lambda_{i1}\lambda_{i2}=a_2b_1^2(nm)^{-1}$, 
relations (\ref{2014-10-15+1}) imply (\ref{2014-10-14+2}).
We only prove the second relation. The proof of the first one is much the same.
Fix a small $0<\delta<1$ and introduce truncation events
\begin{displaymath}
 {\cal H}_i=\{Y_i<\delta\sqrt{n}\},
\qquad
 i=1,2.
\end{displaymath}
 We have
\begin{eqnarray}\label{2014-10-15+1+1}
 \PP(\{v_1\sim v_2\})
&=&
\PP(\{v_1\sim v_2\}\cap {\cal H}_1\cap {\cal H}_2)
+
\PP(\{v_1\sim v_2\}\cap {\bar {\cal H}}_1\cap {\cal H}_2)
\\
\nonumber
&+&
\PP(\{v_1\sim v_2\}\cap {\cal H}_1\cap {\bar {\cal H}}_2)
+
\PP(\{v_1\sim v_2\}\cap {\bar {\cal H}}_1\cap {\bar {\cal H}}_2)
\\\
\nonumber
&
=:
&
p'_1+p'_2+p'_3+p'_4.
\end{eqnarray}
Now we evaluate $p'_1$ and construct upper bounds for $p'_i$, $i=2,3,4$.
Using the independence of $Y_1,Y_2$ and Markov's inequality we obtain
\begin{equation}\label{2014-10-15+2}
p'_4 \le
\PP({\bar {\cal H}}_1\cap {\bar {\cal H}}_2)
\le \delta^{-2}n^{-1}(\E(Y_1{\bar {\mathbb I}}_{{\cal H}_1}))^2.
\end{equation}
Next, 
using the identity $\{v_1\sim v_2\}=\cup_{1\le i\le m}{\cal A}_i$ and inequality
\begin{displaymath}
 \PP({\cal A}_1\cap {\bar {\cal H}}_1)
=
\E(\PP^{\star}({\cal A}_1){\mathbb I}_{{\bar {\cal H}}_1})
\le 
\E(\lambda_{11}\lambda_{21}{\bar {\cal H}}_1)
\end{displaymath}
 we obtain
\begin{eqnarray}\label{2014-10-15+3}
p'_2
\le
\sum_{1\le i\le m}
\PP({\cal A}_i\cap {\bar {\cal H}}_1)
=
m
\PP({\cal A}_1\cap {\bar {\cal H}}_1)
\le
\frac{a_2b_1}{n}\E Y_1{\bar {\mathbb I}}_{{\cal H}_1}.
\end{eqnarray}
Clearly, (\ref{2014-10-15+3}) extends to $p'_3$. An upper bound on $p'_1$ is 
obtained in a similar way,
\begin{equation}\label{2014-10-15+4}
 p'_1\le \PP(\{v_1\sim v_2\})\le \sum_{1\le i\le m}\PP({\cal A}_i)
\le \sum_{1\le i\le m}\E \lambda_{1i}\lambda_{2i}
=
n^{-1}a_2b_1^2.
\end{equation}
To get a lower bound we invoke inclusion-exclusion. We have
\begin{eqnarray}\label{2014-10-15+5}
 p'_1
\ge
\sum_{1\le i\le m}\PP({\cal A}_i\cap {\cal H}_1\cap{\cal H}_2)
-
\sum_{1\le i<j\le m}\PP({\cal A}_i\cap{\cal A}_j\cap {\cal H}_1\cap{\cal H}_2)
=:mp'-{\tbinom{m}{2}}p'',
\end{eqnarray}
where $p'=\PP({\cal A}_1\cap {\cal H}_1\cap{\cal H}_2)$ 
and 
$p''=\PP({\cal A}_1\cap{\cal A}_2\cap {\cal H}_1\cap{\cal H}_2)$.
In order to evaluate 
$p'=\E\bigl(\PP^{\star}({\cal A}_1){\mathbb I}_{{\cal H}_1\cap{\cal H}_2}\bigr)$
we invoke the inequalities
\begin{equation}\label{2014-10-15+6}
 (1-{\bar {\bf I}}_{11}-{\bar {\bf I}}_{12})\lambda_{11}\lambda_{12}
\le 
{\bf I}_{11}{\bf I}_{12}\lambda_{11}\lambda_{12}
=
{\bf I}_{11}{\bf I}_{12}\PP^{\star}({\cal A}_1)
\le 
\PP^{\star}({\cal A}_1)
\le
\lambda_{11}\lambda_{12}.
\end{equation}
We obtain
\begin{equation}\label{2014-10-15+7}
p'
\ge
\E
\bigl(
\PP^{\star}({\cal A}_1)
{\mathbb I}_{{\cal H}_1\cap{\cal H}_2}
{\bf I}_{11}{\bf I}_{12}
\bigr)
\ge 
(nm)^{-1}(a_2b_1^2-R),
\end{equation}
where
$R=\E \bigl(
Y_1Y_2X_1^2
({\bar {\bf I}}_{11}+{\bar {\bf I}}_{12}
+
{\bar{\mathbb I}}_{{\cal H}_1\cap{\cal H}_2})
\bigr)
$.
The next upper bound for $p''$ is simple
\begin{equation}\label{2014-10-15+8}
 p''
=
\E \bigl(
\PP^{\star}({\cal A}_1)\PP^{\star}({\cal A}_2)
{\mathbb I}_{{\cal H}_1\cap{\cal H}_2}
\bigr)
\le
\E(\lambda_{11}\lambda_{12}\lambda_{21}\lambda_{22}
{\mathbb I}_{{\cal H}_1\cap{\cal H}_2})
\le 
\delta^2n^{-1}m^{-2}\E (Y_1Y_2X_1^2X_2^2).
\end{equation}
Collecting (\ref{2014-10-15+7}), (\ref{2014-10-15+8}) in (\ref{2014-10-15+5}) 
we obtain  a lower bound for $p_1'$. Combining this lower bound with
 (\ref{2014-10-15+4})  we obtain 
\begin{equation}\label{2014-10-15+9}
n^{-1}(a_2b_1^2-R-\delta^2a_2^2b_1^2)\le p_1'\le n^{-1}a_2b_1^2. 
\end{equation}
Finally, we choose $\delta=\delta_n$ converging to $0$ slowly enough so that
$\delta\sqrt{n}\to+\infty$ 
and 
$\delta^{-1}\E(Y_1{\bar {\mathbb I}}_{H_1})\to 0$.
We obtain from (\ref{2014-10-15+2}), (\ref{2014-10-15+3}) that 
$p'_2,p'_3,p'_4=o(n^{-1})$ and (\ref{2014-10-15+9}) implies that
$p'_1=n^{-1}a_2b_1^2+o(n^{-1})$. Hence (\ref{2014-10-15+1}) follows from 
(\ref{2014-10-15+1+1}).


{\it Step 2}. Here we prove (\ref{2014-10-14+4}). 
Let us note that event ${\cal A}_1$ implies that $L_0$ counts 
common neighbors of $v_1$ and $v_2$ witnessed by $w_1$. 
In the case where $L_0\not= d_{12}$  there should be a 
common neighbor of $v_1, v_2$  witnessed
by some attribute $w_i$ other than $w_1$ or witnessed by two different attributes, 
say $w_{i_1}, w_{i_2}\in W\setminus\{w_1\}$.
We introduce related events
\begin{eqnarray}\nonumber
&&
 {\cal B}_{00}
=
\{
 {\mathbb I}_{i1}{\mathbb I}_{i2}=1
\ 
{\text{for some}}
\
2\le i\le m\},
\\
\nonumber
&&
{\cal B}_{01}
=
\{
 {\mathbb I}_{i_11}{\mathbb I}_{i_1j}{\mathbb I}_{i_22}{\mathbb I}_{i_2j}=1
\
{\text{for some}}
\
3\le j\le n
\
{\text{and}}
\
2\le i_1\not=i_2\le m
\}
\end{eqnarray}
and observe that on the event 
${\cal A}_1\cap {\overline{({\cal B}_{00}\cup{\cal B}_{01})}}$ 
we have $L_0=d_{12}$. Next, assuming that events ${\cal A}_1$
and $L_0=d_{12}$ hold we consider $L_k$ for $k=1,2$.
The sum $L_k$ counts pairs $w_i\to v_j$, for $i\ge 1$, $j\ge 3$,
such that
$v_j$ is a neighbor of $v_k$ witnessed by 
$w_i$. 
Generally, we have   $L_k\ge d_k-1-d_{12}$. This inequality is strict 
if some common neighbor of $v_1, v_2$ witnessed by $w_1$ is also witnessed 
as a neighbor of $v_k$ by some other attribute $w_i$, $i\ge 2$. The inequality
is also strict in the case where some neighbor of $v_k$ is witnessed by two or more
distinct attributes.  If we rule out both of these possibilities, 
we have the equality
$L_k=d_k-1-d_{12}$.
 We introduce the corresponding undesired events
\begin{eqnarray}
\nonumber
&&
 {\cal B}_{k0}
=
\{
{\mathbb I}_{ik}{\mathbb I}_{ij}{\mathbb I}_{1j}=1
\
{\text{for some}}
\
3\le j\le n
\
{\text{and}}
\
2\le i\le m
\},
\\
\nonumber
&&
{\cal B}_{k1}
=
\{
{\mathbb I}_{i_1k}{\mathbb I}_{i_1j}{\mathbb I}_{i_2k}{\mathbb I}_{i_2j}=1
\
{\text{for some}}
\
3\le j\le n
\
{\text{and}}
\
2\le i_1<i_2\le m
\}.
\end{eqnarray}
Finally, we conclude that if ${\cal A}_1$ holds and at 
least one of the following three relations fails
\begin{displaymath}
L_0=d_{12},
\quad
L_1+L_0=d_1-1,
\quad
L_2+L_0=d_2-1
\end{displaymath}
then at least one of the events ${\cal B}_l, {\cal B}_{lj}$, 
$0\le l\le 2$, $0\le j\le 1$,
occurs. Hence, we have
\begin{eqnarray}\nonumber
 \bigl|\PP({\cal U}_{k_1,k_2}\cap \{d_{12}=r\}\cap{\cal A}_1)
-
\PP({\cal U}^*_{r,k_1-r,k_2-r}\cap{\cal A}_1)\bigr|
\le
\sum_{0\le l\le 2}\sum_{0\le j\le 1}\PP({\cal A}_1\cap {\cal B}_{lj}).
\end{eqnarray}
Next we prove that the quantity on the right is $o((nm)^{-1})$.
For this purpose we estimate
\begin{eqnarray}\label{2014-10-17+1}
 \PP({\cal A}_1\cap{\cal B}_{lj})
=
\E \bigl(\PP^{\star}({\cal A}_1)\PP^{\star}({\cal B}_{lj})\bigr)
\le
(mn)^{-1}\E (Y_1Y_2X_1^2\PP^{\star}({\cal B}_{lj}))
\end{eqnarray}
and show 
that  for any realized values $Y_1,Y_2, X_1$ we have 
\begin{equation}\label{2014-10-17+2}
 \PP^{\star}({\cal B}_{lj})=o(1),
\qquad
{\text{for}}
\qquad
0\le l\le 2,
\quad
0\le j\le 1.
\end{equation}
 Since $\PP^*(\cdot)\le 1$ and $\E Y_1Y_2X_1^2<\infty$,
 Lebesgue's dominated convergence theorem then implies the desired bound 
 $\E (Y_1Y_2X_1^2\PP^{\star}({\cal B}_{lj}))=o(1)$.

Let us show (\ref{2014-10-17+2}). For this purpose we write events ${\cal B}_{lj}$ 
in the form ${\cal B}_{lj}=\{S_{lj}\ge 1\}$, where 
\begin{eqnarray}\nonumber
&&
 S_{00}=
\sum_{2\le i\le m}
{\mathbb I}_{i1}
{\mathbb I}_{i2},
\qquad
\qquad
\qquad
S_{01}=
\sum_{3\le j\le n}
\sum_{2\le i_1\not= i_2\le m}
{\mathbb I}_{i_11}
{\mathbb I}_{i_1j}
{\mathbb I}_{i_22}
{\mathbb I}_{i_2j},
\nonumber
\\
&&
S_{k0}=
\sum_{3\le j\le n}\sum_{2\le i\le m}
{\mathbb I}_{ik}
{\mathbb I}_{ij}
{\mathbb I}_{1j},
\qquad
\
\
S_{k1}=
\sum_{3\le j\le n}\sum_{2\le i_1<i_2\le m}
{\mathbb I}_{i_1k}
{\mathbb I}_{i_1j}
{\mathbb I}_{i_2k}
{\mathbb I}_{i_2j},
\end{eqnarray}
and  apply the inequality
\begin{equation}\label{2014-10-17+3}
 \PP^{\star}(S_{lj}\ge 1)
\le 
\varepsilon+\PP^{\star}({\tilde \E}S_{lj}\ge \varepsilon),
\qquad
\forall
\varepsilon>0.
\end{equation}
Let us briefly explain (\ref{2014-10-17+3}). We apply the trivial inequality
$Z\le \varepsilon+Z{\mathbb I}_{\{Z>\varepsilon\}}$ to the random variable
$Z={\tilde\PP}(S_{lj}\ge 1)$ and obtain 
\begin{equation}\label{2014-10-17+4}
{\tilde\PP}(S_{lj}\ge 1)
\le 
\varepsilon+{\mathbb I}_{\{{\tilde\PP}(S_{lj}\ge 1)>\varepsilon\}}
\le
\varepsilon+{\mathbb I}_{\{{\tilde \E}S_{lj}>\varepsilon\}}.
\end{equation}
The last inequality follows by Markov's inequality,
 ${\tilde\PP}(S_{lj}\ge 1)\le {\tilde \E}S_{lj}$. Taking $\E^{\star}$-expected
values in (\ref{2014-10-17+4})
 we arrive to  (\ref{2014-10-17+3}) since
$\PP^{\star}(S_{lj}\ge 1)=\E^{\star}\bigl({\tilde \PP}(S_{lj}\ge 1)\bigr)$. 

\quad Now,  (\ref{2014-10-17+2}) follows from
(\ref{2014-10-17+3}) and the  fact that ${\tilde \E}S_{lj}=o_{P^{\star}}(1)$
for any realized values $Y_1,Y_2,X_1$.
To show that the latter bound holds true we estimate 
\begin{eqnarray}\nonumber
&&
{\tilde \E}S_{00}
\le 
\sum_{2\le i\le m}\lambda_{i1}\lambda_{i2}
=
Y_1Y_2n^{-1}{\hat a}_2
=
o_{P^{\star}}(1),
\\
\nonumber
&&
{\tilde \E}S_{01}
\le
\sum_{3\le j\le n}
\sum_{2\le i_1\not= i_2\le m}
\lambda_{i_11}\lambda_{i_1j}\lambda_{i_22}\lambda_{i_2j}
\le 
Y_1Y_2({\hat a}_2)^2{\hat b}_2n^{-1}
=
o_{P^{\star}}(1),
\\
\nonumber
&&
{\tilde \E}S_{k0}
\le
\sum_{3\le j\le n}\sum_{2\le i\le m}
{\lambda}_{ik}
{\lambda}_{ij}
{\lambda}_{1j}
\le 
Y_1X_1\beta_n^{-1/2}{\hat a}_2{\hat b}_2n^{-1}
=
o_{P^{\star}}(1),
\\
\nonumber
&&
{\tilde \E}S_{k0}
\le
\sum_{3\le j\le n}\sum_{2\le i_1<i_2\le m}
{\lambda}_{i_1k}
{\lambda}_{i_1j}
{\lambda}_{i_2k}
{\lambda}_{i_2j}
\le
Y_k^2({\hat a}_2)^2{\hat b}_2n^{-1}
=
o_{P^{\star}}(1).
\end{eqnarray}
Here we used the fact that $\E X_1^2<\infty$ implies ${\hat a}_2=O_P(1)$ and
$\E Y_1<\infty$ implies ${\hat b}_2n^{-1}=o_P(1)$.


{\it Step 3}. Here we prove (\ref{2014-10-14+5}). 
Denote
\begin{displaymath}
 \Delta_1=\PP^{\star}({\cal U}^*_{r, r_1,r_2})-
\PP^{\star}({\cal U}^{**}_{r, r_1,r_2}),
\qquad
\Delta_2=\PP^{\star}({\cal A}_1)-\lambda_{11}\lambda_{12}.
\end{displaymath}
From identities
\begin{equation}\nonumber
 \PP({\cal U}^*_{r, r_1,r_2}\cap {\cal A}_1)
=
\E \bigl(\PP^{\star}({\cal U}^*_{r, r_1,r_2})\PP^{\star}({\cal A}_1)\bigr),
\quad
\PP({\cal U}^{**}_{r, r_1,r_2}\cap {\cal A}_1)
=
\E \bigl(\PP^{\star}({\cal U}^{**}_{r,r_1,r_2})\PP^{\star}({\cal A}_1)\bigr)
\end{equation}
we obtain
\begin{equation}\label{2014-10-18+1}
 \PP({\cal U}^*_{r,r_1,r_2}\cap {\cal A}_1)
-
\lambda_{11}\lambda_{12}
\PP({\cal U}^{**}_{r,r_1,r_2})
=
\E \bigl(\Delta_1\PP^{\star}({\cal A}_1)\bigr)
+
\E \bigl(\Delta_2 \PP^{\star}({\cal U}^{**}_{r,r_1,r_2})\bigr). 
\end{equation}
We shall show in Lemma \ref{DTV} below that for any realized values $Y_1,Y_2,X_1$
we have
$\Delta_1=o(1)$. This,
together with the inequality $\PP^{\star}({\cal A}_1)\le \lambda_{11}\lambda_{12}$
implies 
\begin{equation}\label{2014-10-18+2}
\E (\Delta_1 \PP^{\star}({\cal A}_1))
\le 
(nm)^{-1}\E (\Delta_1Y_1Y_2X_1^2)
=
o((nm)^{-1}),
\end{equation}
by Lebesgue's dominated convergence theorem.
Furthermore, (\ref{2014-10-15+6})
implies 
\begin{equation}\label{2014-10-18+3}
 \bigl|
\E \bigl(\Delta_2 \PP^{\star}({\cal U}^{**}_{r,r_1,r_2})\bigr)
\bigr|
\le \E|\Delta_2|
\le 
(nm)^{-1}\E Y_1Y_2X_1^2({\bar {\bf I}}_{11}+{\bar {\bf I}}_{12})
=o((nm)^{-1}).
\end{equation}
Collecting (\ref{2014-10-18+2}) and (\ref{2014-10-18+3}) in 
(\ref{2014-10-18+1}) we obtain  (\ref{2014-10-14+5}).


\medskip

{\it Proof of (ii).} 
The proof (\ref{2014-10-09+1}) 
is similar to that of (\ref{2014-10-05+4}). It makes use the 
observation that
the typical adjacency relation 
is  
witnessed by a single attribute. One difference is that now 
the size of the collection of attributes, prescribed to 
the typical vertex, tends to infinity
as $m/n\to+\infty$ while the number of vertices sharing  any given  
attribute tends to zero. 
As a consequence we obtain that $d_{12}=o_P(1)$. 



The first several steps of the proof are the same as that of (\ref{2014-10-05+4}).
Namely, relations
(\ref{2014-10-14+2}),
(\ref{2014-10-14+3}), (\ref{2014-10-14+4}) hold true  as the argument of their proof 
remains  valid for $m/n\to+\infty$.
Further steps of the proof are a bit different. We show that
\begin{equation}\label{2014-10-28+1} 
\PP({\cal U}^*_{r, k_1-r,k_2-r}\cap {\cal A}_1)
\simeq
0,
\qquad
{\text{for}}
\qquad
r=1,2,\dots, k_1\wedge k_2,
\end{equation}
and  
\begin{equation}\label{2014-10-28+2}
\PP({\cal U}^*_{0, k_1,k_2}\cap {\cal A}_1)
\simeq
\PP(\{L_1=k_1, L_2=k_2\}\cap{\cal A}_1).
\end{equation}
Finally, we show that 
\begin{equation}\label{2014-10-28+3}
\PP(\{L_1=k_1, L_2=k_2\}\cap{\cal A}_1)
\simeq
\E \bigl(\lambda_{11}\lambda_{12}\PP^{\star}({\cal U}^{***}_{k_1,k_2})\bigr).
\end{equation}
Now the simple identity
\begin{displaymath}
 a_2^{-1}b_1^{-2}nm
\E \bigl(\lambda_{11}\lambda_{12} \PP^{\star}({\cal U}^{***}_{k_1,k_2})\bigr)
=
{\tilde p}(k_1+1){\tilde p}(k_2+1)
\end{displaymath}
completes the proof of  (\ref{2014-10-09+1}).
It remains to prove 
 (\ref{2014-10-28+1}), (\ref{2014-10-28+2}), (\ref{2014-10-28+3}).

\quad
Let us prove (\ref{2014-10-28+1}), (\ref{2014-10-28+2}). 
Denote $R'=\PP(\{L_0\ge 1\}\cap {\cal A}_1)$.
Relations (\ref{2014-10-28+1}), (\ref{2014-10-28+2}) follow from  inequalities
\begin{eqnarray}
&&
 \PP({\cal U}^*_{r,k_1-r,k_2-r}\cap{\cal A}_1)\le R',
\qquad
r=1,2,\dots,
\\
&&
0
\le 
\PP(\{L_1=k_1,L_2=k_2\}\cap{\cal A}_1)-\PP({\cal U}^*_{0,k_1,k_2}\cap{\cal A}_1)
\le 
R',
\end{eqnarray}
and the bound 
 $R'=o((nm)^{-1})$.
To prove the latter bound we write
\begin{equation}
 R'
=\E \bigl(\PP^{\star}(L_0\ge 1)\PP^{\star}(A_1)\bigr)
\le
 (nm)^{-1}\E (Y_1Y_2X_1^2\PP^{\star}(L_0\ge 1))=o((nm)^{-1}).
\end{equation}
Here we used
inequalities 
$\PP^{\star}({\cal A}_1)\le \lambda_{11}\lambda_{12}\le (nm)^{-1}Y_1Y_2X_1^2$ 
and the bound  
\linebreak
$\E (Y_1Y_2X_1^2\PP^{\star}(L_0\ge 1))=o(1)$, which follows
by Lebesgue's dominated convergence theorem,  
since 
\begin{displaymath}
\PP^{\star}(L_0 \ge 1)\le \E^{\star}L_0\le\sum_{3\le j\le n}\lambda_{1j}
=
b_1X_1((n-2)/n) \beta_n^{-1/2}=o(1).
\end{displaymath}

\quad 
Let us prove (\ref{2014-10-28+3}). The proof is similar to that of
(\ref{2014-10-14+5}). We write
\begin{equation}\label{2014-10-28+5}
 \PP(\{L_1=k_1,L_2=k_2\}\cap {\cal A}_1)
-
\lambda_{11}\lambda_{12}
\PP({\cal U}^{***}_{k_1,k_2})
=
\E \bigl(\Delta_3\PP^{\star}({\cal A}_1)\bigr)
+
\E \bigl(\Delta_2 \PP^{\star}({\cal U}^{***}_{k_1,k_2})\bigr), 
\end{equation}
where 
\begin{displaymath}
\Delta_2=\PP^{\star}({\cal A}_1)-\lambda_{11}\lambda_{12},
\qquad
\Delta_3=\PP^{\star}(L_1=k_1,L_2=k_2)-\PP^{\star}({\cal U}_{k_1,k_2}^{***})
\end{displaymath}
and show that both summands in the right of (\ref{2014-10-28+5}) are $o((nm)^{-1})$,
cf. proof of 
(\ref{2014-10-14+5})
above. In fact, the only thing that remains to show is 
that for any realized values $Y_1,Y_2,X_1$
we have
$\Delta_3=o(1)$. This is shown in  Lemma \ref{DTV+} below.

\end{proof}



\begin{proof}[Proof of Remark \ref{R0}]
Let us prove (i). 
Lemma \ref{L3} implies 
\begin{equation}\label{2014-11-16+0} 
 \PP(\Lambda_0=r)
\approx 
c^*_1r^{-\varkappa},
\qquad
{\text{where}}
\qquad
c^*_1=c(b_1\beta^{-1/2})^{\varkappa-1}.
\end{equation}
Consequently,
\begin{equation}\label{2014-11-16+1}
 \PP(\tau_1=r)\approx ca_1^{-1}(b_1\beta^{-1/2})^{\varkappa-2}r^{1-\varkappa}.
\end{equation}
From the latter relation  we conclude that the sequence of probabilities 
$\{\PP(\tau_1=r)\}_{r\ge 0}$ is long-tailed and sub-exponential, that is,
it satisfies conditions of Theorem 4.30 of \cite{Foss}. This theorem implies
$\PP(\sum_{1\le i\le \Lambda_1}{\tau_i}=r)
\approx 
\PP(\tau_1=r)\E \Lambda_1$, thus completing the proof.

Let us prove (ii). We observe that $\tau_1$ has Poisson distribution with mean $\lambda_0=xb_1\beta^{-1/2}$.
Hence, given $\Lambda_1$, the random variable $d_*$ has Poisson distribution with mean $\lambda_0\Lambda_1$.
Now statement (iii) of Lemma \ref{L3} implies that $\PP(\Lambda_1=r)\approx c_*r^{-\varkappa}$,
where $c_*=c(a_1\beta^{1/2})^{\varkappa-1}$. Next, we apply statement (iii) of Lemma \ref{L3} once again and obtain
$\PP(d_*=r)\approx c_*\lambda_0^{\varkappa-1}r^{-\varkappa}$.

\end{proof}



\begin{proof}[Proof of Remark \ref{R1}]
The proof is standard.
We present it here
for reader's convenience.
Let $(v_1', v_2')$ denote an ordered pair
of distinct vertices drawn uniformly at random and let $\PP'$  denote the 
conditional probability  
given all the random variables considered, but  $(v_1', v_2')$.
We have
\begin{equation}\label{2014-10-28+++2}
 \PP(d(v_1)=r|v_1\sim v_2)
=
\frac{ \PP(d(v_1)=r,v_1\sim v_2)}{\PP(v_1\sim v_2)}
=
\frac{ \PP(d(v'_1)=r,v'_1\sim v'_2)}{\PP(v_1\sim v_2)}.
\end{equation}
The denominator
is evaluated in (\ref{2014-10-15+1}): We have
\begin{displaymath}
\PP(\{v_1\sim v_2\})
=
 n^{-1}a_2b_1^2
+
o(n^{-1})=n^{-1}\E \Lambda_3+o(n^{-1}).
\end{displaymath}
In the last step we used the simple identities
$\E \Lambda_3=\E \lambda_3=a_2b_1^2$.
In order to evaluate the numerator we 
combine identities
\begin{eqnarray}\nonumber
 &&
\PP'(d(v'_1)=r,v'_1\sim v'_2)
=
\PP'(v'_1\sim v'_2|d(v'_1)=r)\PP'(d(v'_1)=r)
 =
\frac{r}{n-1}\PP'(d(v'_1)=r)
\\
\nonumber
&&
\PP(d(v'_1)=r,v'_1\sim v'_2)=\E\bigl(\PP'(d(v'_1)=r,v'_1\sim v'_2)\bigr).
\end{eqnarray}
We obtain 
$\PP(d(v'_1)=r,v'_1\sim v'_2)=(r/(n-1))\PP(d(v_1)=r)$. 
Hence, by (\ref{2014-10-28+++2}), 
\begin{displaymath}
 \PP(d(v_1)=r|v_1\sim v_2)=r\PP(d(v_1)=r)(\E \Lambda_3)^{-1}+o(1).
\end{displaymath}
Now, the statement (ii) of Theorem \ref{T1} completes the proof
of (\ref{2014-10-09+1++}).
\end{proof}

\begin{proof}[Proof of Theorem \ref{C1}](i) follows from the relation 
$\PP(\Lambda_3=r)\approx c(a_2b_1)^{\varkappa-1}r^{-\varkappa}$, see 
Lemma \ref{L3}.

In the proof of (ii) and (iii) we assume that $k_1\le k_2$. We recall that
$q_i$ is defined in (\ref{refereeformula}) and introduce the notation
\begin{displaymath}
 S_A=\sum_{r\in A}
(r+1)(r+2)
\PP(\Lambda_0=r+2)
q_{k_1-r}q_{k_2-r},
\qquad
A\subset [0,k_1].
\end{displaymath}
\quad
Let us prove  (ii). We observe that $\E (e^{aY_1})<\infty$ implies that
$\E Y_1e^{a'\Lambda_1}<\infty$ for some $a'>0$. Using this observation
and  the fact that  the sequence of probabilities 
$\{\PP(\tau_1=r)\}_{r\ge 0}$ is long-tailed and sub-exponential 
(see (\ref{2014-11-16+1}) and \cite{Foss})
 we show that
\begin{equation}\label{2014-11-16++2}
 \E \bigl(Y_1\PP(d^*_{Y_1}=r|Y_1)\bigl)
\approx \bigl(\E (Y_1\Lambda_1)\bigr)\PP(\tau_1=r). 
\end{equation}
The proof of (\ref{2014-11-16++2}) 
is much the same as that of Theorem 4.30 in \cite{Foss}.
Next, we invoke in (\ref{2014-11-16++2}) 
the identity $\E(Y_1\Lambda_1)=\E (Y_1\lambda_1)=a_1b_2\beta^{1/2}$ and 
(\ref{2014-11-16+1}), and obtain
\begin{equation}\nonumber
 \E \bigl(Y_1\PP(d^*_{Y_1}=r|Y_1)\bigl)
\approx
c^*_2r^{1-\varkappa},
\qquad
{\text{where}}
\qquad
c^*_2=cb_1^{\varkappa-2}b_2\beta^{(3-\varkappa)/2}.
\end{equation}
Hence  we have $q_r\approx c^*_2r^{1-\varkappa}$ and $\PP(\Lambda_0=r)
\approx 
c^*_1r^{-\varkappa}$, see (\ref{2014-11-16+0}).

Now we are ready to prove (\ref{2014-11-10+++2}). 
Let $\varepsilon=\ln (k_1\wedge (k_2-k_1))$ for $k_2-k_1\to+\infty$, and 
 $\varepsilon=\ln k_1$ otherwise. 
Split $S_{[0,k_1]}=S_{A_1}+S_{A_2}+S_{A_3}$,
where
\begin{displaymath}
 A_1=[0,k_1/2],
\qquad
A_2=(k_1/2,k_1-\varepsilon ],
\qquad
A_3=(k_1-\varepsilon, k_1].
\end{displaymath}
In the remaining part of the proof we shall show that $S_{A_1}, S_{A_2}$ 
are negligibly small compared to $S_{A_3}$
and determine the first order asymptotics of $S_{A_3}$ as  $k_1,k_2\to+\infty$.
We have for some ${\bar c}>0$ (independent of $k_1,k_2$)
\begin{eqnarray}
 \nonumber
&&
S_{A_1}
\le 
{\bar c}\sum_{i\in A_1}
\frac{1}{(k_1-i)^{\varkappa-1}}
\frac{1}{(k_2-i)^{\varkappa-1}}
\frac{1}{(1+i)^{\varkappa-2}}
=
O\Bigl(k_1^{4-2\varkappa}k_2^{1-\varkappa}(1+\Delta)\Bigr).
\end{eqnarray}
Here $\Delta=\ln n$ for $\varkappa=3$ and $\Delta=0$ otherwise.
Furthermore, for $k_2-k_1$ bounded we have
\begin{equation}\nonumber
S_{A_2}
\le 
\frac{{\bar c}}{k_1^{\varkappa-2}}\sum_{i\in A_2}
\frac{1}{(k_1-i)^{2\varkappa-2}}
= 
O\Bigl(\varepsilon^{3-2\varkappa} k_1^{2-\varkappa}\Bigr)=o(k_1^{2-\varkappa}).
\end{equation}
For $k_2-k_1\to+\infty$ we have
\begin{equation}\nonumber
S_{A_2}
\le 
\frac{{\bar c}}{k_1^{\varkappa-2}}\sum_{i\in A_2}
\frac{1}{(k_1-i)^{\varkappa-1}}
\frac{1}{(k_2-i)^{\varkappa-1}}
\le
\frac{{\bar c}}{k_1^{\varkappa-2}(k_2-k_1)^{\varkappa-1}}\sum_{i\in A_2}
\frac{1}{(k_1-i)^{\varkappa-1}}.
\end{equation}
Since $\sum_{i\in A_2}
\frac{1}{(k_1-i)^{\varkappa-1}}=O(\varepsilon^{2-\varkappa})=o(1)$ we obtain
$S_{A_2}=o\bigl(k_1^{2-\varkappa}(k_2-k_1)^{1-\varkappa}\bigr)$. 

Finally, using the approximation
$(i+1)(i+2)\PP(\Lambda_0=i+2)\approx c^*_1k_1^{2-\varkappa}$ uniformly in
$i\in A_3$ we obtain for $k_2-k_1\to+\infty$
\begin{equation}\nonumber
S_{A_3}
=
\frac{c^*_1(1+o(1))}{k_1^{\varkappa-2}}\sum_{i\in A_3}q_{k_1-i}q_{k_2-i}
\approx
\frac{c^*_1}{k_1^{\varkappa-2}}\frac{c^*_2}{(k_2-k_1)^{\varkappa -1}}
\sum_{i\in A_3}q_{k_1-i}
\approx 
\frac{c^*_1c^*_2b_1}{k_1^{\varkappa-2}(k_2-k_1)^{\varkappa -1}}.
\end{equation}
Here we used  $\sum_{r\ge 0}q_r=b_1$.
Similarly, in the case where $k_2-k_1=k$ for some fixed $k$ we have
\begin{equation}\nonumber
 S_{A_3}
=
\frac{c^*_1(1+o(1))}{k_1^{\varkappa-2}}\sum_{i\in A_3}q_{k_1-i}q_{k_2-i}
\approx
\frac{c^*_1c^*_{3,k}}{k_1^{\varkappa-2}},
\quad
{\text{where}}
\quad 
c^*_{3,k}=\sum_{i\ge 0}q_{i}q_{k+i}.
\end{equation}

\quad
Let us prove (iii). We observe that $\Lambda_0$ has the 
Poisson distribution with (non-random) mean
$\lambda_0$. Using the identity 
$(r+1)(r+2)\PP(\Lambda_0=r+2)=\lambda_0^2\PP(\Lambda_0=r)$ we write
\begin{eqnarray}\nonumber
&&
S_{[0,k_1]}
=
\lambda_0^2\sum_{r=0}^{k_1}\PP(\Lambda_0=r)q_{k_1-r}q_{k_2-r}
=
\lambda_0^2\E\bigl(q_{k_1-\Lambda_0}q_{k_2-\Lambda_0}\bigr)
=
\lambda_0^2(J_1+J_2),
\\
\nonumber
&&
J_1
=
\E\bigl(q_{k_1-\Lambda_0}q_{k_2-\Lambda_0}\bigr)
{\mathbb I}_{\{\Lambda_0<\sqrt k_1\}},
\qquad
J_2
=
\E\bigl(q_{k_1-\Lambda_0}q_{k_2-\Lambda_0}\bigr)
{\mathbb I}_{\{\sqrt{k_1}\le \Lambda_0\le k_1\}}.
\end{eqnarray}
Next, combining the fast decay of Poisson tail probability
$\PP(\Lambda_0>t)$ as $t\to+\infty$ with the relation, which is shown below, 
\begin{equation}\label{2014-11-22+5}
q_r\approx 
c_0r^{1-\varkappa},
\qquad
c_0=c(x^2b_1)^{\varkappa-2},
\end{equation}
 we estimate 
$J_1=(1+o(1)) c_0^2(k_1k_2)^{1-\varkappa}$ and
$J_2=o((k_1k_2)^{1-\varkappa}$.
We obtain that
$S_{[0,1]}=(1+o(1))\lambda_0^2J_1$.
Now the identity  $p_{\beta}(k_1,k_2)=\beta b_1^{-4}x^{-2}S_{[0,k_1]}$ completes 
the proof
of (\ref{2014-11-10+++2+}).

Let us prove (\ref{2014-11-22+5}).
Since $\tau_1$ has Poisson distribution with mean 
$\lambda_0=xb_1\beta^{-1/2}$, we obtain  
\begin{eqnarray}\nonumber
 \PP\bigl(d^*_{Y_1}=k|Y_1=y\bigr)
&=&
\E\Bigl(\PP\bigl(d^*_{Y_1}=k\bigl|\Lambda_1, Y_1=y)\Bigr|Y_1=y\Bigr)
\\
\nonumber
&=&
\E
\Bigl(
e^{-\lambda_0\Lambda_1}\frac{(\lambda_0\Lambda_1)^k}{k!}
\Bigl|
Y_1=y
\Bigr)
\\
\nonumber
&=&
\sum_{i\ge 0}
e^{-\lambda_0i}\frac{(\lambda_0i)^k}{k!}e^{-By}\frac{(By)^i}{i!}.
\end{eqnarray}
Here we denote $B=x\beta^{1/2}$. After we write the product $y\PP(d^*_y=k)$ 
in the form
\begin{displaymath}
\sum_{i\ge 0}
e^{-\lambda_0i}\frac{(\lambda_0i)^k}{k!}e^{-By}\frac{(By)^{i+1}}{(i+1)!}
\frac{i+1}{B}
=
\E
\Bigl(
e^{-\lambda_0(\Lambda_1-1)}\frac{(\lambda_0(\Lambda_1-1))^k}{k!}
\frac{\Lambda_1}{B}
{\mathbb I}_{\{\Lambda_1\ge 1\}}
\Bigr| Y_1=y
\Bigr) 
\end{displaymath}
we obtain the following expression for the expectation 
$q_k=\E \bigl(Y_1\PP(d^*_{Y_1}=k|Y_1)\bigr)$
\begin{equation}\label{2014-11-22+2}
 q_k=\frac{\PP(\Lambda_1\ge 1)}{B}(I_{1,k}+I_{2,k}),
\quad
I_{1,k}=\E \Bigl(e^{-\lambda_0Z}\frac{(\lambda_0Z)^k}{k!}\Bigr),
\quad
I_{2,k}=\E \Bigl(Ze^{-\lambda_0Z}\frac{(\lambda_0Z)^k}{k!}\Bigr).
\end{equation}
Here $Z$ denotes a random variable with the distribution
\begin{displaymath}
 \PP(Z=r)=\PP(\Lambda_1=r+1)/\PP(\Lambda_1\ge 1),
\qquad
 r=0,1,\dots.
\end{displaymath}
We note that 
\begin{equation}\label{2014-11-22+1}
 \PP(Z=r)\approx c' r^{-\varkappa},
\qquad
{\text{ where}}
\qquad
c'=cB^{\varkappa-1}/\PP(\Lambda_1\ge 1). 
\end{equation}
Indeed, (\ref{2014-11-22+1}) follows from the relation 
$\PP(\Lambda_1=r)\approx cB^{\varkappa-1}r^{-\varkappa}$, 
which is a simply consequence of
the property ${\cal P}_{c,\varkappa}$ of the distribution of $Y_1$, see
Lemma \ref{L3}.
Next, we show that 
\begin{equation}\label{2014-11-22+3}
 I_{1,k}\approx c'\lambda_0^{\varkappa-1}k^{-\varkappa},
\qquad
I_{2,k}\approx c'\lambda_0^{\varkappa-2}
k^{1-\varkappa}.
\end{equation}
The first relation follows from (\ref{2014-11-22+1}), by Lemma \ref{L3}.
The second relation follows from the first one via the simple identity
$I_{2,k}=(k+1)\lambda_0^{-1}I_{1,k+1}$. Finally
invoking (\ref{2014-11-22+3}) in (\ref{2014-11-22+2})
we obtain (\ref{2014-11-22+5}).

\end{proof}



\section{Appendix A}

Here we prove the bounds $\Delta_1=o(1)$ and $\Delta_3=o(1)$, see 
(\ref{2014-10-18+1}) and
(\ref{2014-10-28+5}) above. In the proof  we apply and 
further extend
the approach of \cite{BloznelisDamarackas2013}. An important tool used below
is  the following inequality 
 referred to as Le Cam's lemma, see e.g., \cite{Steele}.
\begin{lem}\label{LeCamLemma} Let $S={\mathbb I}_1+ {\mathbb I}_2+\dots+ {\mathbb I}_n$ be the sum of independent random indicators
with probabilities $\PP({\mathbb I}_i=1)=p_i$. Let $\Lambda$ be Poisson random variable with mean $p_1+\dots+p_n$. The total variation
distance between the distributions $P_S$ of $P_{\Lambda}$ of $S$ and $\Lambda$
\begin{equation}\label{LeCam}
\sup_{A\subset \{0,1,2\dots \}}|\PP(S\in A)-\PP(\Lambda\in A)|= \frac{1}{2} \sum_{k\ge 0}|\PP(S=k)-\PP(\Lambda=k)|
\le
\sum_{i}p_i^2.
\end{equation}
\end{lem}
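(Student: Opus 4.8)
The plan is to prove the two assertions of the lemma separately: first the identity relating the supremum formulation of the total variation distance to its $\ell_1$ formulation, and then the upper bound $\sum_i p_i^2$ by a coupling argument combined with a one-coordinate computation. The identity is the standard characterization of total variation distance for laws on $\{0,1,2,\dots\}$ and is independent of the Poisson structure. I would set $A_*=\{k\ge 0:\PP(S=k)\ge\PP(\Lambda=k)\}$ and note that, since $\sum_k\PP(S=k)=\sum_k\PP(\Lambda=k)=1$, the positive and negative parts of the differences $\PP(S=k)-\PP(\Lambda=k)$ carry equal total mass. Hence for every $A$ the quantity $\PP(S\in A)-\PP(\Lambda\in A)$ is maximized in absolute value at $A=A_*$, where it equals $\tfrac12\sum_k|\PP(S=k)-\PP(\Lambda=k)|$. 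This gives the first equality in (\ref{LeCam}) with no computation specific to the approximation.

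For the inequality I would exploit subadditivity of the total variation distance under independent sums. Write $\Lambda=\Lambda_1+\dots+\Lambda_n$, where the $\Lambda_i$ are independent Poisson variables with means $p_i$; their sum is Poisson with mean $p_1+\dots+p_n$, as required. For each $i$ construct a maximal coupling of $\mathbb{I}_i$ (Bernoulli with parameter $p_i$) and $\Lambda_i$ on a common probability space, so that
\begin{displaymath}
\PP(\mathbb{I}_i\neq\Lambda_i)=\tfrac12\sum_{k\ge0}|\PP(\mathbb{I}_i=k)-\PP(\Lambda_i=k)|.
\end{displaymath}
Taking these couplings independently across $i$ yields a coupling of $S$ and $\Lambda$ for which $\{S\neq\Lambda\}\subset\bigcup_i\{\mathbb{I}_i\neq\Lambda_i\}$, whence
\begin{displaymath}
\sup_{A}|\PP(S\in A)-\PP(\Lambda\in A)|\le\PP(S\neq\Lambda)\le\sum_i\PP(\mathbb{I}_i\neq\Lambda_i).
\end{displaymath}

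It then remains to bound each single-coordinate distance by $p_i^2$. A direct evaluation of $\tfrac12\sum_{k\ge0}|\PP(\mathbb{I}_i=k)-\PP(\Lambda_i=k)|$, treating the cases $k=0$, $k=1$ and $k\ge2$ and using $\sum_{k\ge2}e^{-p_i}p_i^k/k!=1-e^{-p_i}(1+p_i)$, collapses to $p_i(1-e^{-p_i})$, which is at most $p_i^2$ by the elementary inequality $1-e^{-x}\le x$. Summing over $i$ gives $\sum_i p_i^2$ and completes the proof. I expect no serious obstacle here: the subadditivity step is a routine coupling, and the only genuinely quantitative point is the one-coordinate computation, which is elementary. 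The main care needed is organizational, namely stating the coupling cleanly and verifying that the per-coordinate total variation distance does collapse to $p_i(1-e^{-p_i})$ rather than a larger expression.
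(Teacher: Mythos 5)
Your proposal is correct, and in fact it supplies something the paper does not: the paper states this lemma as a known result and simply cites Steele's article on Le Cam's inequality, giving no proof of its own. Your two-part argument is the classical one. The identity between the supremum and the $\ell_1$ formulations via the set $A_*=\{k:\PP(S=k)\ge\PP(\Lambda=k)\}$ is the standard characterization of total variation distance on a countable space, and your use of the fact that the positive and negative parts of $\PP(S=k)-\PP(\Lambda=k)$ carry equal mass is exactly what makes both the equality at $A_*$ and the bound for arbitrary $A$ work. The coupling step is also sound: writing $\Lambda=\Lambda_1+\dots+\Lambda_n$ with independent Poisson$(p_i)$ summands, coupling each pair $({\mathbb I}_i,\Lambda_i)$ maximally and independently across $i$, and using $\{S\neq\Lambda\}\subset\bigcup_i\{{\mathbb I}_i\neq\Lambda_i\}$ together with the coupling inequality gives the required subadditivity. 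Your single-coordinate computation checks out: the three contributions $(e^{-p_i}-1+p_i)$, $p_i(1-e^{-p_i})$, and $1-e^{-p_i}(1+p_i)$ sum to $2p_i(1-e^{-p_i})$, so the per-coordinate total variation distance is $p_i(1-e^{-p_i})\le p_i^2$, which is even slightly sharper than the stated bound before applying $1-e^{-x}\le x$. The only cosmetic remark is that you do not need the coupling to be maximal; any coupling with $\PP({\mathbb I}_i\neq\Lambda_i)\le p_i^2$ suffices, but maximality lets you quote the single-coordinate total variation computation directly, which is a clean way to organize it.
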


\begin{lem}
  \label{DTV}
   Assume that conditions of part (i) of Theorem \ref{T2} are satisfied.
Then for any realized values $Y_1,Y_2,X_1$ and any integers $r,r_1,r_2\ge 0$
we have
\begin{equation}\label{2014-10-21++0}
 \PP^{\star}({\cal U}^*_{r, r_1,r_2})-\PP^{\star}({\cal U}^{**}_{r, r_1,r_2})=o(1).
\end{equation}
\end{lem}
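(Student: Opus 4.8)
The plan is to condition on the whole weight vector $(X,Y)=(X_1,\dots,X_m,Y_1,\dots,Y_n)$, under which all link indicators ${\mathbb I}_{ij}$ become independent Bernoulli variables, prove total variation approximations for the three coordinates of ${\bf L}$ at this level, and then average back to $\PP^{\star}$ using the law of large numbers and bounded convergence. The starting point is a bookkeeping observation: $L_0=\sum_{3\le j\le n}{\mathbb I}_{1j}$ is built from the indicators $\{{\mathbb I}_{1j}\}_{j\ge 3}$ alone, whereas $L_1,L_2$ are built from $\{{\mathbb I}_{i1},{\mathbb I}_{i2},{\mathbb I}_{ij}\}_{i\ge 2,\,j\ge 3}$; these index sets are disjoint, so given $X,Y$ the variable $L_0$ is \emph{exactly} independent of $(L_1,L_2)$, and ${\tilde \PP}({\cal U}^*_{r,r_1,r_2})={\tilde \PP}(L_0=r)\,{\tilde \PP}(L_1=r_1,L_2=r_2)$. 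It therefore suffices to approximate the $L_0$ factor and the $(L_1,L_2)$ factor separately and to identify the limits of their (random) parameters.

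For the $L_0$ factor I would apply LeCam's lemma (Lemma \ref{LeCamLemma}) directly. Given $X,Y$, $L_0$ is a sum of independent indicators with ${\tilde \E}L_0=\sum_{3\le j\le n}p_{1j}$, and ${\tilde d}_{TV}(L_0,{\cal P}({\tilde \E}L_0))\le \sum_{j}p_{1j}^2\le X_1^2{\hat b}_2 m^{-1}=o_{P^{\star}}(1)$, since $\E Y_1<\infty$ gives ${\hat b}_2n^{-1}=o_P(1)$ (used already in Step 2). As ${\tilde \E}L_0=X_1{\hat b}_1\beta_n^{-1/2}(1+o_{P^{\star}}(1))\to X_1b_1\beta^{-1/2}=\lambda_0$ by the law of large numbers (${\hat b}_1\to b_1$ as $\E Y_1<\infty$), the Poisson parameter converges and ${\tilde \PP}(L_0=r)\to e^{-\lambda_0}\lambda_0^r/r!=\PP^{\star}(\Lambda_0=r)$ in $P^{\star}$-probability.

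The substantial part is the $(L_1,L_2)$ factor, where I would use a compound Poisson approximation. Writing $L_1=\sum_{2\le i\le m}Z_i$ with $Z_i={\mathbb I}_{i1}u_i$, the summands $Z_i$ are independent given $X,Y$; grouping the neighbours of $v_1$ by their witnessing attribute is exactly what restores independence, since for fixed $i$ the indicators ${\mathbb I}_{i1}{\mathbb I}_{ij}$ are coupled through ${\mathbb I}_{i1}$ and cannot be fed into LeCam one by one. Hence $L_1$ is close in total variation to the compound Poisson law with L\'evy measure $\nu_1(\{r\})=\sum_i{\tilde \PP}(Z_i=r)$, $r\ge 1$. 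Using ${\mathbb I}_{i1}\perp u_i$ given $X,Y$ one gets $\nu_1(\{r\})=\sum_i p_{i1}{\tilde \PP}(u_i=r)$, and the law of large numbers gives $\nu_1(\{r\})\to Y_1\sqrt{\beta}\,\E(Xe^{-\lambda_0}\lambda_0^r/r!)=\lambda_1\PP(\tau_1=r)$, which is precisely the L\'evy measure of $d^*_{Y_1}=\sum_{k\le\Lambda_1}\tau_k$ (the irrelevant atom at $r=0$ aside); the same computation handles $L_2$ and $d^*_{Y_2}$. For the joint law I would establish asymptotic independence of $L_1$ and $L_2$: their only coupling comes from attributes witnessing both $v_1$ and $v_2$, whose number is $S_{00}=\sum_i{\mathbb I}_{i1}{\mathbb I}_{i2}$ with ${\tilde \E}S_{00}=Y_1Y_2{\hat a}_2n^{-1}=o_{P^{\star}}(1)$ (here $\E X_1^2<\infty$ gives ${\hat a}_2=O_P(1)$, again as in Step 2). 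On $\{S_{00}=0\}$ the sums $L_1,L_2$ range over disjoint attribute sets and factorize, so ${\tilde \PP}(L_1=r_1,L_2=r_2)={\tilde \PP}(L_1=r_1)\,{\tilde \PP}(L_2=r_2)+o_{P^{\star}}(1)$.

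Finally I would assemble the pieces. Combining the three conditional approximations gives ${\tilde \PP}({\cal U}^*_{r,r_1,r_2})=\PP^{\star}(\Lambda_0=r)\PP^{\star}(d^*_{Y_1}=r_1)\PP^{\star}(d^*_{Y_2}=r_2)+o_{P^{\star}}(1)=\PP^{\star}({\cal U}^{**}_{r,r_1,r_2})+o_{P^{\star}}(1)$, the product form of the right-hand side being the conditional independence of $\Lambda_0,d^*_{Y_1},d^*_{Y_2}$ built into ${\cal U}^{**}$. Taking $\E^{\star}$-expectations and using bounded convergence (every term is a probability, hence bounded by $1$) turns $o_{P^{\star}}(1)$ into $o(1)$ and yields $\PP^{\star}({\cal U}^*_{r,r_1,r_2})-\PP^{\star}({\cal U}^{**}_{r,r_1,r_2})=o(1)$, which is (\ref{2014-10-21++0}). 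I expect the main obstacle to be the quantitative compound Poisson bound for $(L_1,L_2)$ under the minimal moments $\E X_1^2<\infty$ and $\E Y_1<\infty$: one must simultaneously control the total variation error (terms of order $\sum_i{\tilde \PP}(Z_i\ge 1)^2$ together with the tail of the jump law) and dispose of the truncation ${\bf I}_{ij}=1-{\bar{\bf I}}_{ij}$ coming from $p_{ij}=\min\{1,\lambda_{ij}\}$, which will most likely force a slow truncation of the weights as in Step 1 to keep all remainders $o_{P^{\star}}(1)$.
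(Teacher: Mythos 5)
Your proposal is correct in outline, and it takes a genuinely different route from the paper's own proof. The paper works through a chain of five coupled vectors ${\bf L}^{(0)},\dots,{\bf L}^{(4)}$: it first replaces the indicators ${\mathbb I}_{ik}$ by Poisson variables $\eta_{ki}$ and then the $u_i$ by Poisson variables $\xi_{2i}$, one summand at a time via LeCam's lemma and the triangle inequality; it then adjusts the random means (${\hat b}_1\beta_n^{-1/2}\to b_1\beta^{-1/2}$) by an explicit coupling; and it finishes by comparing conditional characteristic functions, exploiting the compound Poisson structure and the homogeneity $\lambda_{jk}/{\hat \lambda}_k=X_j/{\hat a}_1$ (independent of $k$), at which point it cites relation (22) of the earlier degree-distribution paper. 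You instead exploit the exact conditional independence of $L_0$ and $(L_1,L_2)$ given $X,Y$, apply the accompanying compound Poisson law directly to the grouped summands $Z_i={\mathbb I}_{i1}u_i$ (error $\sum_i{\tilde\PP}(Z_i\ge 1)^2\le Y_1^2{\hat a}_2n^{-1}$), identify the limit by $\ell_1$-convergence of the random L\'evy measure $\nu_1$, and handle the $L_1$--$L_2$ dependence upfront by conditioning on the attribute sets being disjoint --- which is, interestingly, the argument the paper uses in Lemma \ref{DTV+} for the $m/n\to\infty$ regime, not here. Your route is more self-contained (no appeal to an external characteristic-function estimate) and conceptually transparent; the paper's chain, with its per-summand caps $\min\{1,\cdot\}$ fed into dominated convergence, is precisely engineered to survive the minimal moments $\E X_1^2<\infty$, $\E Y_1<\infty$. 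That is also where your acknowledged obstacle is real but surmountable: to get $\sum_{r\ge 1}|\nu_1(\{r\})-\lambda_1\PP(\tau_1=r)|=o_{P^{\star}}(1)$ you cannot sum the raw per-attribute Poissonization errors $\lambda_{i1}X_i^2{\hat b}_2(\beta_n n)^{-1}$ (this would implicitly require third moments of $X_1$); you must cap each error at $1$ before summing, use the independence of ${\hat b}_2$ from $X_i$ and dominated convergence exactly as in the paper's bound for $Z_2$, replace ${\hat b}_1\beta_n^{-1/2}$ by $b_1\beta^{-1/2}$ in a second pass, and then run the law of large numbers on the genuinely i.i.d.\ terms $X_ie^{-\lambda_0(X_i)}\lambda_0(X_i)^r/r!$ together with a Scheff\'e-type (subsequence) argument to upgrade pointwise-plus-total-mass convergence in probability to $\ell_1$ convergence. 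With those repairs, which borrow only estimates already present in the paper, your argument goes through.
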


\begin{proof}[Proof of Lemma \ref{DTV}] Denote  
\begin{displaymath}
 {\bf L}^{(0)}=(L_0^{(0)}, L_1^{(0)}, L_2^{(0)}):=(L_0,L_1,L_2),
\qquad
{\bf L}^{(4)}=(L_0^{(4)}, L_1^{(4)}, L_2^{(4)}):=(\Lambda_0, d^*_{Y_1}, d^*_{Y_2}).
\end{displaymath}
 In the proof we construct random vectors 
${\bf L}^{(h)}=(L_0^{(h)}, L_1^{(h)}, L_2^{(h)})$, $h=1,2,3$, defined on the same 
probability space as $Y_1,Y_2, X_1$ such that for any realized values $Y_1,Y_2,X_1$ 
we have
\begin{eqnarray}\label{2014-10-21++1}
 &&
d^{\star}_{TV}({\bf L}^{(h)}, {\bf L}^{(h+1)})=o(1),
\qquad
h=0,1,
\\
\label{2014-10-21++2}
&&
\PP^{\star}\bigl({\bf L}^{(h)}=(r,r_1,r_2)\bigr)
-
\PP^{\star}\bigl({\bf L}^{(h+1)}=(r,r_1,r_2)\bigr)
=o(1),
\qquad
h=2, 3,
\end{eqnarray}
for any integers $r,r_1,r_2\ge 0$.
We note that (\ref{2014-10-21++1}), (\ref{2014-10-21++2})
imply (\ref{2014-10-21++0}).

\smallskip

Let us define random vectors ${\bf L}^{(h)}$, $h=1,2,3$. 
Given $X,Y$, let $\{\eta_{ki}\}_{i=2}^m$, $k=1,2$ and  
$\{\xi_{hi}\}_{i=1}^m$, $h=2,3$, be sequences of Poisson random
variables which are (conditionally, given $X,Y$) independent within each sequence
and have mean values
\begin{equation}\nonumber
 {\tilde \E}\eta_{ki}=\lambda_{ki},
\qquad
{\tilde \E}\xi_{2i}=\sum_{3\le j\le n}\lambda_{ij},
{\tilde \E}\xi_{3i}=\frac{b_1}{\sqrt{\beta}}X_i.
\end{equation}
In addition, we assume that (conditionally, given $X,Y$) the sequences 
$\{\eta_{ki}\}_{i=2}^m$, $k=1,2$, are independent and they are independent of the 
sequences $\{\xi_{hi}\}_{i=1}^m$, $h=2,3,4$.
Denote for $k=1,2$ and $h=2,3$
\begin{equation}\nonumber
 L_0^{(1)}=u_1, 
\qquad
L_0^{(h)}=\xi_{h1},
\qquad
L_k^{(1)}=\sum_{2\le i\le m}\eta_{ki}u_i,
\qquad
L_k^{(h)}=\sum_{2\le i\le m}\eta_{ki}\xi_{hi}.
\end{equation}

\quad
Let us prove (\ref{2014-10-21++1}) for $h=0$. 
In the proof we use the following simple inequalities. Let
${\tilde d}^*_{TV}(\zeta,\theta)$ denote the total variation distance between the 
conditional distributions of random variables/vectors $\zeta$ and $\theta$ given 
$X,Y,u_1,\dots, u_m$. 
Then  we have
\begin{equation}\label{2014-10-23+1}
 {\tilde d}_{TV}(\zeta,\theta)
\le 
{\tilde \E}{\tilde d}^*_{TV}(\zeta,\theta),
\qquad
d^{\star}_{TV}(\zeta,\theta)
\le 
\E^{\star}{\tilde d}_{TV}(\zeta,\theta)
\le
\E^{\star}{\tilde d}^*_{TV}(\zeta,\theta).
\end{equation}
Introduce random variables
for $k=1,2$ and $t=1,\dots, m$
\begin{displaymath}
 L_k^{[t]}=\sum_{2\le i\le t}{\mathbb I}_{ik}u_i+\sum_{t+1\le i\le m}\eta_{ki}u_i.
\end{displaymath}
Note that $L_k^{[1]}=L_k^{(1)}$ and $L_k^{[m]}=L_k^{(0)}$. 
We have, 
by the triangle inequality,
\begin{displaymath}
 {\tilde d}^*_{TV}(L_k^{[1]},L_k^{[m]})
\le
\sum_{2\le t\le m} {\tilde d}^*_{TV}(L_k^{[t-1]},L_k^{[t]})
\le 
\sum_{t=2}^m\lambda^2_{tk}.
\end{displaymath}
Here we estimated each summand
\begin{equation}\label{2014-10-22+1}
 {\tilde d}^*_{TV}(L_k^{[t-1]},L_k^{[t]})
\le
{\tilde d}^*_{TV}({\mathbb I}_{tk},\eta_{kt})\le \lambda^2_{tk}.
\end{equation}
In the first inequality of  (\ref{2014-10-22+1}) we use the fact that 
$L_k^{[t-1]}$ and $L_k^{[t]}$ only differ in the $t$-th summand. The second 
inequality is trivial for $\lambda_{tk}\ge 1$ and it follows 
 by Le Cam's inequality, see Lemma \ref{LeCamLemma}, for  $\lambda_{tk}<1$.
 
\quad
Next, we use the fact that given $X,Y,u_1,\dots, u_m$ the random vectors
${\bf L}^{(0)}$ and ${\bf L}^{(1)}$ have conditionally 
independent marginals. In particular, we can apply the triangle inequality 
\begin{displaymath}
 {\tilde d}^*_{TV}({\bf L}^{(0)},{\bf L}^{(1)})
\le 
\sum_{0\le k\le 2}{\tilde d}^*_{TV}(L_k^{(0)},L_k^{(1)})
\le \sum_{2\le t\le m}(\lambda_{t1}^2+\lambda_{t2}^2).
\end{displaymath}
In the last step we use ${\tilde d}^*_{TV}(L_0^{(0)},L_0^{(1)})=0$.
Finally, (\ref{2014-10-23+1}) implies 
\begin{displaymath}
 d^{\star}_{TV}({\bf L}^{(0)},{\bf L}^{(1)})
\le 
\E^{\star} \sum_{2\le t\le m}(\lambda_{t1}^2+\lambda_{t2}^2)
=
\frac{2}{n}\frac{m-2}{m}a_2(Y_1^2+Y_2^2)=o(1).
\end{displaymath}


\quad
Let us prove (\ref{2014-10-21++1}) for $h=1$. We note that conditionally, given
$X,Y$, the random variable $L_0^{(1)}$ is independent of $(L_1^{(1)}, L_2^{(1)})$, 
and  $L_0^{(2)}$ is independent of $(L_1^{(2)}, L_2^{(2)})$. Hence, we have, by 
triangle inequality, 
\begin{displaymath}
 {\tilde d}_{TV}\bigl({\bf L}^{(1)}, {\bf L}^{(2)}\bigr)
\le
{\tilde d}_{TV}(L_0^{(1)},L_0^{(2)})
+
{\tilde d}_{TV}\bigl( (L_1^{(1)}, L_2^{(1)}),   (L_1^{(2)}, L_2^{(2)}) \bigr)
=: Z_1+Z_2.
\end{displaymath}
We shall show that $\E^{\star}Z_i=o(1)$, $i=1,2$. These bounds combined with the second 
inequality of (\ref{2014-10-23+1}) imply 
$d^{\star}_{TV}\bigl({\bf L}^{(1)}, {\bf L}^{(2)}\bigr)=o(1)$.

We firstly estimate $Z_1$. We have 
\begin{equation}\label{2014-10-06+1}
Z_1
=
{\tilde d}_{TV}(u_1,\xi_{21})
\le
\sum_{3\le j\le n}\lambda^2_{1j}
=
\beta_n^{-1}X_1^2{\hat b}_2n^{-1}.
\end{equation}
Indeed, for
$\max_j\lambda_{1j}<1$
this inequality follows 
 by Lemma \ref{LeCamLemma}.
Otherwise the inequality is trivial, since the total variation distance is always
less than or equal to $1$.
Next, we use the fact that $\E Y_1<\infty$ implies ${\hat b}_2n^{-1}=o_P(1)$.
We obtain $Z_1=o_{P^{\star}}(1)$. This bound together with the inequality 
$Z_1\le 1$ implies $\E^{\star}Z_1=o(1)$.

We secondly estimate $Z_2$. Introduce random variables
for $k=1,2$ and $t=1,\dots, m$
\begin{displaymath}
 L_k^{\{t\}}=\sum_{2\le i\le t}\eta_{ki}u_i+\sum_{t+1\le i\le m}\eta_{ki}\xi_{2i}.
\end{displaymath}
Note that $L_k^{\{1\}}=L_k^{(2)}$ and $L_k^{\{m\}}=L_k^{(1)}$. 
We apply  triangle inequality
\begin{equation}\label{2014-10-23+4}
{\tilde d}_{TV}\bigl((L_1^{\{m\}},L_2^{\{m\}}),(L_1^{\{1\}},L_2^{\{1\}})\bigr)
\le
\sum_{t=2}^m
{\tilde d}_{TV}\bigl((L_1^{\{t-1\}},L_2^{\{t-1\}}),(L_1^{\{t\}},L_2^{\{t\}})\bigr)
\end{equation}
and  estimate each summand as follows
\begin{eqnarray}\label{2014-10-5+8}
 {\tilde d}_{TV}\bigl((L_1^{\{t-1\}},L_2^{\{t-1\}}),(L_1^{\{t\}},L_2^{\{t\}})\bigr)
&\le& 
{\tilde d}_{TV}
\bigl((u_t\eta_{t1}, u_t\eta_{t2}),(\xi_{2t}\eta_{t1},\xi_{2t}\eta_{t2})\bigr)
\\
\label{2014-10-5+9}
&\le& 
{\tilde \PP}\bigl((\eta_{t1},\eta_{t2})\not=(0,0)\bigr)
{\tilde d}_{TV}(u_t,\xi_{2t})
\\
\label{2014-10-5+10}
&
\le
&
(\lambda_{t1}+\lambda_{t2})\min\bigl\{1,\, \beta_n^{-1}X_t^2{\hat b}_2n^{-1}\bigr\}.
\end{eqnarray}
In  (\ref{2014-10-5+8}) we use the fact that 
$L_k^{\{t-1\}}$ and $L_k^{\{t\}}$ only differ in the $t$-th summand.
 In (\ref{2014-10-5+9}) we  first estimate the total variation distance
between conditional distributions of $(u_t\eta_{t1}, u_t\eta_{t2})$
 and $(\xi_{2t}\eta_{t1},\xi_{2t}\eta_{t2})$ given $\eta_{t1}, \eta_{t2}, X,Y$
from above by 
${\mathbb I}_{\{(\eta_{t1},\eta_{t2})\not=(0,0)\}}{\tilde d}_{TV}(u_t,\xi_{2t})$
and then take the expected value ${\tilde \E}$.
 In (\ref{2014-10-5+10}) we estimate 
${\tilde \PP}\bigl((\eta_{t1},\eta_{t2})\not=(0,0)\bigr)
\le
\lambda_{t1}+\lambda_{t2}$. This bound is trivial for 
$\lambda_{t1}+\lambda_{t2}\ge 1$. Otherwise it follows from the inequalities
\begin{displaymath}
 {\tilde \PP}\bigl((\eta_{t1},\eta_{t2})\not=(0,0)\bigr)
\le
{\tilde \PP}(\eta_{t1}\not=0)
+
{\tilde \PP}(\eta_{t2}\not=0)
=e^0-e^{-\lambda_{t1}}+e^0-e^{-\lambda_{t2}}
\le 
\lambda_{t1}+\lambda_{t2}.
\end{displaymath}
In(\ref{2014-10-5+10}) we invoke the inequality
${\tilde d}_{TV}(u_t,\xi_{2t})
\le 
\min\bigl\{1,\, \beta_n^{-1}X_t^2{\hat b}_2n^{-1}\bigr\}$, 
cf. (\ref{2014-10-06+1}) above.

\quad
Next, using the identities $L_k^{\{1\}}=L_k^{(2)}$ and $L_k^{\{m\}}=L_k^{(1)}$
we obtain from (\ref{2014-10-23+4}), (\ref{2014-10-5+10}) that
\begin{eqnarray}\label{2014-10-06+2}
Z_2\le
\sum_{t=2}^m(\lambda_{t1}+\lambda_{t2})
\min\bigl\{1,\, \frac{X_t^2{\hat b}_2}{\beta_nn}\bigr\}
=
(Y_1+Y_2)\beta_n^{1/2}
\sum_{t=2}^m\frac{X_t}{m}\bigl\{1,\, \frac{X_t^2{\hat b}_2}{\beta_nn}\bigr\}.
\end{eqnarray}
Finally, we have,
\begin{displaymath}
\E^{\star}Z_2
=
\beta^{1/2}(Y_1+Y_2)
\E^{\star}X_2\bigl\{1,\, X_2^2{\hat b}_2\beta_n^{-1}n^{-1}\bigr\}
=
\beta^{1/2}(Y_1+Y_2)
\E X_2\bigl\{1,\, X_2^2{\hat b}_2\beta_n^{-1}n^{-1}\bigr\}
=
o(1),
\end{displaymath}
since ${\hat b}_2$ is independent of $X_2$ and $\E X_2<\infty$, and 
${\hat b}_2n^{-1}=o_P(1)$.


\quad
Let us prove (\ref{2014-10-21++2}) for $h=2$.
Given $X,Y$, let $\{\xi_i\}_{i=1}^m$, $\{\Delta'_i\}_{i=1}^m$, 
$\{\Delta''_i\}_{i=1}^m$ be independent Poisson random variables, which are 
independent of $\{\eta_{i1}\}_{i=2}^m$, $\{\eta_{i2}\}_{i=2}^m$, and have 
mean values
\begin{equation}\label{2014-10-23++1}
 {\tilde \E}\xi_i=bX_i,
\qquad
{\tilde \E}\Delta'_i=\delta'X_i,
\qquad
{\tilde \E}\Delta''_i=\delta''X_i,
\qquad
1\le i\le m.
\end{equation}
Here $\delta'={\hat b}_1\beta_n^{-1/2}-b$ and $\delta'=b_1\beta^{-1/2}-b$,
and $b=\min\{{\hat b}_1\beta_n^{-1/2}, b_1\beta^{-1/2}\}$.
Define the random vectors ${\bar {\bf L}}^{(2)}$ and 
${\bar {\bf L}}^{(3)}$ in the same way as ${\bf L}^{(2)}$ and 
${\bf L}^{(3)}$ above, but with $\xi_{2i}$ and $\xi_{3i}$ replaced by
${\bar \xi}_{2i}=\xi_i+\Delta'_i$ and 
${\bar \xi}_{3i}=\xi_i+\Delta''_i$ respectively. We note that given $Y_1,Y_2, X_1$
the random vector 
${\bar {\bf L}}^{(2)}$ has the same conditional distribution as 
 ${\bf L}^{(2)}$, and  
${\bar {\bf L}}^{(3)}$ has the same conditional distribution as 
${\bf L}^{(3)}$. Hence in order to prove  (\ref{2014-10-21++2}) for $h=2$ it 
suffices to show that for any realized values $Y_1,Y_2,X_1$ we have
\begin{equation}\label{2014-10-23++2}
\PP^{\star}(|{\bar L}_k^{(2)}-{\bar L}_k^{(3)}|\ge 1)=o(1),
\qquad
k=0,1,2.
\end{equation}
Here ${\bar L}_k^{(i)}$ denotes the $k$-th coordinate of the vector 
${\bar{\bf L}}^{(i)}$.
We derive (\ref{2014-10-23++2}) from the bounds 
${\tilde \E}|{\bar L}_k^{(2)}-{\bar L}_k^{(3)}|=o_{P^{\star}}(1)$
using (\ref{2014-10-17+3}).
To prove these bounds we 
calculate
\begin{eqnarray}\nonumber
&&
{\tilde \E}|{\bar L}_0^{(2)}-{\bar L}_0^{(3)}|
=
(\delta'+\delta'')X_1,
\\
\nonumber
&&
 {\tilde \E}|{\bar L}_k^{(2)}-{\bar L}_k^{(3)}|
=
\sum_{2\le i\le m} {\tilde \E}\eta_{ki}(\delta'+\delta'')X_i
=
\beta_n^{1/2}Y_k{\hat a}_2(\delta'+\delta''),
\qquad
k=1,2
\end{eqnarray}
and use the fact that 
 $\delta'+\delta''=|{\hat b}_1\beta_n^{-1/2}-b_1\beta^{-1/2}|=o_P(1)$.
Indeed, we have
 $\beta_n\to\beta$ and, by the law of large numbers,
${\hat b}_1-b_1=o_P(1)$.


Let us prove (\ref{2014-10-21++2}) for $h=3$. We note that $L_0^{(3)}$ has the same
distribution as $\Lambda_0$. Furthermore, given $Y_1,Y_2,X_1$, the random variable
$L_0^{(3)}$ is conditionally independent of $(L_1^{(3)}, L_2^{(3)})$. Hence, 
it suffices to show that 
$\PP^*((L_1^{(3)},L_2^{(3)})=(r_1,r_2))-\PP^*((L_1^{(4)},L_2^{(4)})=(r_1,r_2))=o(1)$
for any integers $r_1,r_2\ge 0$. For this purpose we prove the convergence
of the conditional characteristic functions 
\begin{equation}\label{2014-10-27+1}
 \E^{\star} e^{{\bf i}tL_1^{(3)}+{\bf i}sL_2^{(3)}}
\to 
\E^{\star} e^{{\bf i}tL_1^{(4)}+{\bf i}sL_2^{(4)}},
\qquad
\forall s,t\in (-\infty,+\infty).
\end{equation}
Here ${\bf i}$ denotes the imaginary unit. 

\quad
In the proof of (\ref{2014-10-27+1}) we exploit the compound Poisson structure of 
the distributions of $L_k^{(h)}$, $h=2,3$, $k=1,2$.
Using the fact that, given $Y_1,Y_2,X_1$, the marginals of $(L_1^{(4)},L_2^{(4)})$ 
are conditionally independent and have compound Poisson distributions, 
we write the  characteristics function in the form
\begin{equation}\label{2014-10-27+2}
\E^{\star} e^{{\bf i}tL_1^{(4)}+{\bf i}sL_2^{(4)}}
=
\bigl(\E^{\star} e^{{\bf i}tL_1^{(4)}}\bigr)
\cdot
\bigl(\E^{\star} e^{{\bf i}sL_2^{(4)}}\bigr)
=
e^{(f_{\tau}(t)-1)\lambda_1}
\cdot
e^{(f_{\tau}(s)-1)\lambda_2}. 
=:
f_1(t)
\cdot
f_2(s).
\end{equation}
Here $f_\tau(t)=\E e^{it\tau}$ denotes the characteristic function of $\tau$. 
Similarly, using the fact that given $X,Y, \xi_{32},\dots,\xi_{3m}$ the marginals
of 
$(L_1^{(3)},L_2^{(3)})$ 
are conditionally independent and have compound Poisson distributions, 
we write the  conditional characteristics function in the form
\begin{equation}\label{2014-10-27+3}
{\hat \E} e^{{\bf i}tL_1^{(3)}+{\bf i}sL_2^{(3)}}
=
\bigl({\hat \E} e^{{\bf i}tL_1^{(3)}}\bigr)
\cdot
\bigl({\hat \E} e^{{\bf i}sL_2^{(3)}}\bigr)
=
e^{({\hat f}_{\tau}(t)-1){\hat\lambda}_1}
\cdot
e^{({\hat f}_{\tau}(s)-1){\hat \lambda}_2}
=:
g_1(t)
\cdot
g_2(s).
\end{equation}
Here ${\hat \E}$ denotes the conditional expectation given 
$X,Y, \xi_{32},\dots,\xi_{3m}$. Furthermore, we denote 
${\hat \lambda}_k=\sum_{2\le j\le m}\lambda_{jk}$  and 
\begin{displaymath}
 {\hat f}_{\tau}(t)=\sum_{r\ge 0}e^{itr}{\hat p}_r, 
\qquad
{\text{where}}
\qquad
{\hat p}_r
=
{\hat \lambda}_1^{-1}\sum_{2\le j\le m}\lambda_{j1}{\mathbb I}_{\{\xi_{3j}=r\}}.
%
\end{displaymath}
We observe that ${\hat \lambda}_k=Y_k{\hat a}_1\sqrt{\beta_n}$ and 
each  ratio $\lambda_{jk}/{\hat \lambda}_k=X_j/{\hat a}_1$ 
does not depend on $k=1,2$.

\quad
Finally,  using  (\ref{2014-10-27+2}), (\ref{2014-10-27+3}) we write
\begin{equation}\nonumber
 \E^{\star} e^{{\bf i}tL_1^{(3)}+{\bf i}sL_2^{(3)}}
-
\E^{\star} e^{{\bf i}tL_1^{(4)}+{\bf i}sL_2^{(4)}}
=
\E^{\star}\bigl(g_1(t)-f_1(t)\bigr)g_2(s)
+
\E^{\star}\bigl(g_2(s)-f_2(s)\bigr)f_1(t)
\end{equation}      
and invoke the bounds  $\E^{\star}\bigl(g_1(t)-f_1(t)\bigr)g_2(s)=o(1)$ and
$\E^{\star}\bigl(g_2(s)-f_2(s)\bigr)f_1(t)=o(1)$,
which are obtained in the same way as relation
(22) in \cite{BloznelisDamarackas2013}. We note that the proof of
 (22) in \cite{BloznelisDamarackas2013} uses the moment conditions
$\E X_1^2<\infty$, $\E Y_1<\infty$ that are assumed to hold in the statement of
our Theorem \ref{T2}. 
\end{proof}


\begin{lem}
  \label{DTV+}
   Assume that conditions of part (ii) of Theorem \ref{T2} are satisfied.
Then for any realized values $Y_1,Y_2,X_1$ and any integers $k_1,k_2\ge 0$
we have
\begin{equation}\label{2014-10-28++0}
 \PP^{\star}(L_1=k_1,L_2=k_2)-\PP^{\star}({\cal U}^{***}_{k_1,k_2})=o(1).
\end{equation}
\end{lem}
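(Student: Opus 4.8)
The plan is to prove that, given $Y_1,Y_2,X_1$, the pair $(L_1,L_2)$ converges in distribution to the pair of \emph{independent} Poisson variables $(\Lambda_3,\Lambda_4)$; since $\lambda_3=Y_1a_2b_1$ and $\lambda_4=Y_2a_2b_1$ are non-random given $Y_1,Y_2$, the target $\PP^{\star}({\cal U}^{***}_{k_1,k_2})$ equals $e^{-\lambda_3}\lambda_3^{k_1}(k_1!)^{-1}\,e^{-\lambda_4}\lambda_4^{k_2}(k_2!)^{-1}$. The argument runs parallel to the passage from ${\bf L}^{(2)}$ to ${\bf L}^{(4)}$ in Lemma \ref{DTV}, but is simpler because the limit is a genuine rather than a compound Poisson law. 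The starting point is that, given $X,Y$, the vectors $\zeta_i:=({\mathbb I}_{i1}u_i,{\mathbb I}_{i2}u_i)$, $2\le i\le m$, are independent and each equals $(0,0)$ with high probability, so $(L_1,L_2)=\sum_{2\le i\le m}\zeta_i$ is a sum of independent, rare, $\mathbb{Z}^2_{\ge0}$-valued summands. Note that $w_1$ does not enter $L_1,L_2$, so $X_1$ plays no role here.

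For such a sum it suffices to show that the intensity of the summands concentrates on the two directions $(1,0)$ and $(0,1)$, with masses $\lambda_3$ and $\lambda_4$ and with the remaining mass vanishing; the bivariate limit then charges only these two jump types and hence factorises into $\mathrm{Poisson}(\lambda_3)\otimes\mathrm{Poisson}(\lambda_4)$. Concretely I would establish the three relations $\sum_i\PP^{\star}({\mathbb I}_{ik}u_i\ge2)=o(1)$, $\sum_i\PP^{\star}({\mathbb I}_{i1}{\mathbb I}_{i2}u_i\ge1)=o(1)$ and $\sum_i\PP^{\star}({\mathbb I}_{ik}u_i\ge1)\to\lambda_{2+k}$, $k=1,2$. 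The middle relation, which says that no attribute charges both coordinates and is exactly what would correlate $L_1$ and $L_2$, is immediate: it is at most $\sum_i\E^{\star}\lambda_{i1}\lambda_{i2}=(m-1)a_2Y_1Y_2(nm)^{-1}=o(1)$. For the last relation I would use the first one to replace the event $\{{\mathbb I}_{ik}u_i\ge1\}$ by ${\mathbb I}_{ik}u_i$, reducing it to $\sum_i\E^{\star}({\mathbb I}_{ik}u_i)=\sum_i\E^{\star}\bigl(p_{ik}\sum_{3\le j\le n}p_{ij}\bigr)$, and then replace $p$ by $\lambda$ (the cost handled by truncation, below) and evaluate the expectation as $(m-1)(n-2)(nm)^{-1}Y_ka_2b_1\to\lambda_{2+k}$, using $\E X_1^2=a_2$ and $\E Y_1=b_1$. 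The three relations are then assembled through the conditional characteristic function exactly as in (\ref{2014-10-27+1})--(\ref{2014-10-27+3}): the only input beyond them is that the second-order terms of the logarithmic expansion of $\prod_i{\tilde\E}\,e^{{\bf i}t\zeta_i^{(1)}+{\bf i}s\zeta_i^{(2)}}$ are negligible, which follows from $\E X_1^2<\infty$ through $\max_{2\le i\le m}X_i=o_P(\sqrt m)$. This yields $\E^{\star}e^{{\bf i}tL_1+{\bf i}sL_2}\to e^{(e^{{\bf i}t}-1)\lambda_3+(e^{{\bf i}s}-1)\lambda_4}$ and hence (\ref{2014-10-28++0}).

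The delicate point, and the main obstacle, is the first relation — that no single attribute supplies two or more neighbours of $v_k$ — under the minimal hypothesis $\E X_1^2<\infty$. A direct estimate with ${\tilde\PP}(u_i\ge2)\le\tfrac12\bigl(\sum_{3\le j\le n}\lambda_{ij}\bigr)^2$ gives ${\tilde\E}\sum_i{\mathbb I}_{ik}{\mathbb I}_{\{u_i\ge2\}}\le\tfrac12Y_k{\hat a}_3{\hat b}_1^2\beta_n^{-1/2}$, which features the empirical third moment ${\hat a}_3$ and need \emph{not} be $o_{P^{\star}}(1)$ when $\E X_1^3=\infty$. Here I would run the truncation argument announced after (\ref{2014-10-14+5}), splitting each attribute according to $\{X_i\le t_n\}$ and $\{X_i>t_n\}$ for a level $t_n\to\infty$. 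On the bulk one uses $X_i^3\le t_nX_i^2$, so the corresponding ${\tilde\E}$ is at most $\tfrac12Y_k{\hat a}_2{\hat b}_1^2\,t_n\beta_n^{-1/2}$, which is $o_{P^{\star}}(1)$ as soon as $t_n=o(\sqrt{\beta_n})$; the heavy attributes are handled by bounding the expected number of them linked to $v_k$, namely $Y_k\psi(t_n)\sqrt{\beta_n}$ with $\psi(t):=\E\bigl(X_1{\mathbb I}_{\{X_1>t\}}\bigr)$, and then passing from these expectations to probabilities through inequality (\ref{2014-10-17+3}). Both errors can be sent to $0$ simultaneously by choosing $t_n\to\infty$ with $t_n=o(\sqrt{\beta_n})$ and $\psi(t_n)\sqrt{\beta_n}\to0$; such a choice exists \emph{precisely because} $\E X_1^2<\infty$ forces $t\,\psi(t)\to0$ as $t\to\infty$. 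The same truncation also justifies replacing $p_{ij}$ by $\lambda_{ij}$ (disposing of the events ${\bar{\bf I}}_{ij}=1$) in the mean computation of the preceding paragraph, which completes the proof of Lemma \ref{DTV+}.
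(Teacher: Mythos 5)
Your proposal is correct, but it follows a genuinely different route from the paper. The paper proves Lemma \ref{DTV+} by constructing an explicit chain of intermediate random vectors ${\bar {\bf L}}^{(0)},\dots,{\bar {\bf L}}^{(6)}$ interpolating between $(L_1,L_2)$ and $(\Lambda_3,\Lambda_4)$, and bounding each consecutive discrepancy — see (\ref{2014-10-30+1})--(\ref{2014-11-1++1}) — via couplings and LeCam's inequality (Lemma \ref{LeCamLemma}): first $u_i$ is replaced by a Poisson variable, then the empirical intensity by the population one, then heavy attributes are removed by the \emph{fixed}-$\varepsilon$ truncation ${\mathbb I}'_i={\mathbb I}_{\{X_i\beta_n^{-1/2}b_1<\varepsilon\}}$, then Poisson marks are replaced by Bernoulli ones, the two coordinates are decoupled through the shared-attribute event ${\cal D}$, and finally the Bernoulli sums are re-Poissonized with intensities $\theta_k\to\lambda_{2+k}$ in probability; the resulting error is of size $\varepsilon(Y_1+Y_2)a_2b_1+o(1)$ and one lets $\varepsilon\downarrow 0$. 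You instead run a direct bivariate law of rare events on $(L_1,L_2)=\sum_{2\le i\le m}\zeta_i$ through the conditional characteristic function, reducing everything to three intensity relations (no multiple hits per attribute, no attribute hitting both coordinates, convergence of the single-hit intensity), and you handle the third-moment obstruction — which is exactly the reason the paper truncates — by a \emph{moving} truncation level $t_n\to\infty$, $t_n=o(\sqrt{\beta_n})$, $\psi(t_n)\sqrt{\beta_n}\to 0$, whose existence you correctly derive from $t\,\E(X_1{\mathbb I}_{\{X_1>t\}})\to 0$ under $\E X_1^2<\infty$. Both arguments hinge on the same three probabilistic facts (rarity of double hits, negligibility of shared attributes, LLN for the intensity), but your scheme buys directness: genuine $o(1)$ errors at every stage and no menagerie of intermediate vectors, at the cost of selecting the sequence $t_n$; the paper's scheme buys quantitative total-variation bounds at each step and reuse of the LeCam/characteristic-function machinery of \cite{BloznelisDamarackas2013}. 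Two points of bookkeeping you should make explicit when writing this up: the three relations must be established in their conditional (given $X,Y$) form, as $o_{P^{\star}}(1)$ statements with the leading term handled by the law of large numbers (your displayed $\E^{\star}$-computations give only the means), since it is the conditional characteristic function to which the product expansion applies before taking $\E^{\star}$ by bounded convergence; and replacing $\PP^{\star}({\mathbb I}_{ik}u_i\ge 1)$ by $\E^{\star}({\mathbb I}_{ik}u_i)$ requires the expectation form $\sum_i\E({\mathbb I}_{ik}(u_i-1)^+)=o(1)$, slightly stronger than your relation on $\sum_i\PP^{\star}({\mathbb I}_{ik}u_i\ge 2)$, though it follows from the same bound $(\sum_j\lambda_{ij})^2$ and the same truncation. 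Neither point is a gap, only a matter of stating the estimates in the form the assembly actually consumes.
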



\begin{proof}[Proof of Lemma \ref{DTV+}]
Before the proof we introduce some notation.
Let $\varepsilon>0$.
For $2\le i\le m$ denote
\begin{displaymath}
 {\mathbb I}'_i
=
{\mathbb I}'_i(\varepsilon)
=
{\mathbb I}_{\{X_i\beta_n^{-1/2}b_1<\varepsilon\}},
\quad
\
\gamma_i=X_i\beta_n^{-1/2}b_1{\mathbb I}'_i
\quad
\
{\text{and}}
\quad
\
\theta_k=\sum_{2\le i\le m}\lambda_{ik}\gamma_i,
\quad
k=1,2. 
\end{displaymath}
Given $X,Y$, let ${\tilde {\mathbb I}}_{2},\dots, {\tilde {\mathbb I}}_{m}$ be 
independent
Bernoulli random variables with success probabilities
\begin{displaymath}
{\tilde \PP}({\tilde {\mathbb I}}_{i}=1)
=
1-{\tilde \PP}({\tilde {\mathbb I}}_{i}=0)
=
\gamma_i,
\end{displaymath} 
and let ${\bar \xi}_{hi}$, $2\le i\le m$, $h=1,2$, be independent Poisson random
variables  with mean values
\begin{equation}\nonumber
{\tilde \E}{\bar \xi}_{1i}=\sum_{3\le j\le n}\lambda_{ij},
\qquad
{\tilde \E}{\bar \xi}_{2i}=\frac{b_1}{\sqrt{\beta_n}}X_i.
\end{equation}
We assume that given $X,Y$ the sequences
$\{{\mathbb I}_{i}\}_{i=2}^m$, $\{{\tilde {\mathbb I}}_{i}\}_{i=2}^m$,
and $\{{\bar \xi}_{hi}\}_{i=2}^m$, $h=1,2$ are  independent.
Next, we introduce  random vectors 
${\bar {\bf L}}^{(h)}=({\bar L}_1^{(h)}, {\bar L}_2^{(h)})$, $0\le h\le 6$. 
Denote
\begin{displaymath}
 {\bar {\bf L}}^{(0)}=({\bar L}_1^{(0)}, {\bar L}_2^{(0)}):=(L_1,L_2),
\qquad
{\bar {\bf L}}^{(6)}=({\bar L}_1^{(6)}, {\bar L}_2^{(6)}):=(\Lambda_3,\Lambda_4).
\end{displaymath}
For $k=1,2$ and $h=1,2$  denote
\begin{eqnarray}\nonumber
&&
{\bar L}_k^{(h)}=\sum_{2\le i\le m}{\mathbb I}_{ik}{\bar \xi}_{hi},
\qquad
{\bar L}_k^{(3)}=\sum_{2\le i\le m}{\mathbb I}_{ik}{\mathbb I}'_i{\bar \xi}_{2i},
\qquad
{\bar L}_k^{(4)}=\sum_{2\le i\le m}{\mathbb I}_{ik}{\tilde {\mathbb I}}_i.
\end{eqnarray}
Furthermore, given $X,Y$, let ${\bar L}_1^{(5)}$ and  ${\bar L}_2^{(5)}$ 
be 
independent Poisson random variables with mean values 
${\tilde \E}{\bar L}_k^{(5)}=\theta_k$.

We shall show below that for any integers $r_1,r_2\ge 0$ we have
\begin{eqnarray}\label{2014-10-30+1}
 &&
d^{\star}_{TV}({\bf L}^{(0)}, {\bf L}^{(1)})=o(1),
\\
\label{2014-10-30+3}
&&
\PP^{\star}\bigl({\bar {\bf L}}^{(1)}=(r_1,r_2)\bigr)
-
\PP^{\star}\bigl({\bar {\bf L}}^{(2)}=(r_1,r_2)\bigr)=o(1),
\\
\label{2014-10-30+4}
&&
\E^{\star} |L_k^{(2)}-L_k^{(3)}|=o(1),
\qquad
k=1,2,
\\ 
\label{2014-10-30+2}
&&
d^{\star}_{TV}({\bf L}^{(3)}, {\bf L}^{(4)})\le \varepsilon(Y_1+Y_2)a_2b_1,
\\
\label{2014-11-1++1}
&&
\bigl|
\PP^{\star}\bigl({\bar {\bf L}}^{(4)}=(r_1,r_2)\bigr)
-
\PP^{\star}\bigl({\bar {\bf L}}^{(6)}=(r_1,r_2)\bigr)
\bigr|
\le \varepsilon(Y_1+Y_2)a_2b_1+o(1).
\end{eqnarray}
Since $\varepsilon>0$ can be chosen  arbitrarily small, relations
 (\ref{2014-10-30+1}), (\ref{2014-10-30+3}), (\ref{2014-10-30+4}),
(\ref{2014-10-30+2}), and
(\ref{2014-11-1++1})
imply (\ref{2014-10-28++0}).

\smallskip

\quad
Let us prove (\ref{2014-10-30+1}). 
  Introduce random variables
\begin{displaymath}
 {\bar L}_k^{\{t\}}
=
\sum_{2\le i\le t}{\mathbb I}_{ik}u_i
+
\sum_{t+1\le i\le m}{\mathbb I}_{ik}{\bar \xi}_{1i},
\qquad
t=1,\dots, m,
\quad
k=1,2.
\end{displaymath}
Note that 
${\bar L}_k^{\{1\}}={\bar L}_k^{(1)}$ and ${\bar L}_k^{\{m\}}={\bar L}_k^{(0)}$. 
We apply  triangle inequality
\begin{equation}\label{2014-10-30+5}
{\tilde d}_{TV}
\bigl(
({\bar L}_1^{\{m\}},{\bar L}_2^{\{m\}}),({\bar L}_1^{\{1\}},{\bar L}_2^{\{1\}})
\bigr)
\le
\sum_{t=2}^m
{\tilde d}_{TV}
\bigl(
({\bar L}_1^{\{t-1\}},{\bar L}_2^{\{t-1\}}),({\bar L}_1^{\{t\}},{\bar L}_2^{\{t\}})
\bigr)
\end{equation}
and  estimate each summand as follows
\begin{eqnarray}\label{2014-10-30+6}
 {\tilde d}_{TV}
\bigl(
({\bar L}_1^{\{t-1\}},{\bar L}_2^{\{t-1\}}),({\bar L}_1^{\{t\}},{\bar L}_2^{\{t\}})
\bigr)
&\le& 
{\tilde d}_{TV}
\bigl(
({\mathbb I}_{t1}u_t, {\mathbb I}_{t2}u_t),
({\mathbb I}_{t1}{\bar \xi}_{1t},{\mathbb I}_{t2}{\bar \xi}_{1t})
\bigr)
\\
\label{2014-10-30+7}
&\le& 
{\tilde \PP}\bigl(({\mathbb I}_{t1},{\mathbb I}_{t2})\not=(0,0)\bigr)
{\tilde d}_{TV}(u_t,{\bar \xi}_{1t})
\\
\label{2014-10-30+8}
&
\le
&
(\lambda_{t1}+\lambda_{t2})\beta_n^{-1/2}X_t({\hat b}_2n^{-1})^{1/2}.
\end{eqnarray}
In  (\ref{2014-10-30+6}) we use the fact that 
${\bar L}_k^{\{t-1\}}$ and ${\bar L}_k^{\{t\}}$ only differ in the $t$-th summand.
 In (\ref{2014-10-30+7}) we  first estimate the total variation distance
between conditional distributions of 
$({\mathbb I}_{t1}u_t, {\mathbb I}_{t2}u_t)$
 and $({\mathbb I}_{t1}{\bar \xi}_{1t},{\mathbb I}_{t2}{\bar \xi}_{1t})$ 
given ${\mathbb I}_{t1}, {\mathbb I}_{t2}, X,Y$
from above by 
${\mathbb I}_{\{(\eta_{t1},\eta_{t2})\not=(0,0)\}}
{\tilde d}_{TV}(u_t,{\bar \xi}_{1t})$
and then take the expected value ${\tilde \E}$.
 In (\ref{2014-10-30+8}) we estimate 
${\tilde \PP}\bigl(({\mathbb I}_{t1},{\mathbb I}_{t2})\not=(0,0)\bigr)
\le
\lambda_{t1}+\lambda_{t2}$
and  
 invoke the inequality
\begin{displaymath}
{ \tilde d}_{TV}(u_t,{\bar \xi}_{1t})
\le 
\min\bigl\{1,\, \beta_n^{-1}X_t^2{\hat b}_2n^{-1}\bigr\}
\le \bigl(\beta_n^{-1}X_t^2{\hat b}_2n^{-1}\bigr)^{1/2},
\end{displaymath}
cf. (\ref{2014-10-06+1}) above.

\quad
Next, using the identities ${\bar L}_k^{\{1\}}={\bar L}_k^{(1)}$ 
and ${\bar L}_k^{\{m\}}={\bar L}_k^{(0)}$
we obtain from (\ref{2014-10-30+5}), (\ref{2014-10-30+8}) that
\begin{eqnarray}\label{2014-10-30+9}
{\tilde d}_{TV}({\bar {\bf L}}^{(0)}, {\bar {\bf L}}^{(1)})
\le
\sum_{t=2}^m(\lambda_{t1}+\lambda_{t2})
\beta_n^{-1/2}X_t({\hat b}_2n^{-1})^{1/2}
=
(Y_1+Y_2)
{\hat a}_2({\hat b}_2/n)^{1/2}.
\end{eqnarray}
Finally, since $\E X_1^2<\infty$ and $\E Y_1<\infty$ imply ${\hat a}_2=O_P(1)$ and 
${\hat b}_2/n=o_P(1)$, we conclude that 
${\tilde d}_{TV}({\bar {\bf L}}^{(0)}, {\bar {\bf L}}^{(1)})=o_{P^{\star}}(1)$.
Hence, 
\begin{displaymath}
 d^{\star}_{TV}({\bar {\bf L}}^{(0)}, {\bar {\bf L}}^{(1)})
=
\E^{\star}{\tilde d}_{TV}({\bar {\bf L}}^{(0)}, {\bar {\bf L}}^{(1)})
=
\E^{\star}
\min
\bigl\{1,{\tilde d}_{TV}({\bar {\bf L}}^{(0)}, {\bar {\bf L}}^{(1)})\bigr\}
=o(1).
\end{displaymath}



Let us prove (\ref{2014-10-30+3}). We proceed as in the proof 
of (\ref{2014-10-21++2}) for $h=2$ above. Denote
$b=\min\{{\hat b}_1,b_1\}$ and $\delta'={\hat b}_1-b$,  
$\delta''=b_1-b$.
Given $X,Y$, let ${\hat \xi}_i$, ${\hat \Delta}'_i$, ${\hat \Delta}''_i$, 
$2\le i\le m$, be independent Poisson random variables, which are independent
of ${\mathbb I}_2,\dots, {\mathbb I}_m$, and  having mean values
\begin{displaymath}
 {\tilde \E}{\hat \xi}_i=\beta_n^{-1/2}X_ib,
\qquad
{\tilde \E}{\hat \Delta}'_i=\beta_n^{-1/2}X_i\delta',
\qquad
{\tilde \E}{\hat \Delta}''_i=\beta_n^{-1/2}X_i\delta''.
\end{displaymath}
Denote 
\begin{displaymath}
{\hat L}_k^{(1)}=\sum_{2\le i\le m}{\mathbb I}_{ik}({\hat \xi}_i+{\hat\Delta}'_i),
\qquad
 {\hat L}_k^{(2)}=\sum_{2\le i\le m}{\mathbb I}_{ik}({\hat \xi}_i+{\hat\Delta}''_i),
\quad
k=1,2.
\end{displaymath}
We note that ${\hat {\bf L}}^{(h)}:=({\hat L}_1^{(h)},{\hat L}_2^{(h)})$ 
has the same distribution
as ${\bf {\bar L}}^{(h)}$, $h=1,2$. Hence it suffices to show 
(\ref{2014-10-30+3}) for ${\hat {\bf L}}^{(1)}, {\hat {\bf L}}^{(2)}$.
Proceeding as in the proof of (\ref{2014-10-21++2}) 
for $h=2$ above, we reduce the problem
to showing that  
\begin{displaymath}
 {\tilde \E}|{\hat L}_k^{(1)}-{\hat L}_k^{(2)}|
=
\sum_{2\le i\le m}\lambda_{ik}
\beta_n^{-1}(\delta'+\delta'')
=
Y_k{\hat a}_2|{\hat b}_1-b_1|=o(1),
\qquad
k=1,2.
\end{displaymath}


Let us prove (\ref{2014-10-30+4}). We have, for $k=1,2$,
\begin{displaymath}
 {\tilde \E} |L_k^{(2)}-L_k^{(3)}|
=
\sum_{2\le i\le m}
(1-{\mathbb I}'_i)
\bigl({\tilde \E}{\mathbb I}_{ik}\bigr)
\bigl({\tilde \E}{\bar \xi}_{2i}\bigr)
\le
Y_kb_1m^{-1}\sum_{2\le i\le m}
(1-{\mathbb I}'_i)X_i^2.
\end{displaymath}
Taking $\E^{\star}$ expected value we obtain
\begin{displaymath}
{\E}^{\star} |L_k^{(2)}-L_k^{(3)}|
=
{\E}^{\star}
\bigl( {\tilde \E} |L_k^{(2)}-L_k^{(3)}|\bigr)
\le
Y_kb_1\E X_2^2{\mathbb I}_{\{X_2\beta_n^{-1/2}b_1\ge \varepsilon\}}
=o(1).
\end{displaymath}
We note that the expectation on right tends to zero since $\beta_n\to+\infty$.



Let us prove (\ref{2014-10-30+2}). We proceed as in the proof of 
(\ref{2014-10-30+1}) for $h=0$ above.
We have
\begin{equation}\nonumber
 {\tilde d}_{TV}({\bf L}^{(3)}, {\bf L}^{(4)})
\le 
\sum_{2\le i\le m}
{\mathbb I}'_i
{\tilde \PP}\bigl({\mathbb I}_{i1},{\mathbb I}_{i2})\not= (0,0)\bigr)
{\tilde d}_{TV}({\bar \xi}_{2i},{\tilde {\mathbb I}}_{i}).
\end{equation}
Next, we estimate 
${\mathbb I}_i'
{\tilde d}_{TV}({\bar \xi}_{2i}, {\tilde {\mathbb I}}_i)\le \gamma_i^2$, by
Le Cam's inequality (\ref{LeCam}), and invoke the inequality
${\tilde \PP}\bigl({\mathbb I}_{i1},{\mathbb I}_{i2})\not= (0,0)\bigr)
\le 
\lambda_{i1}+\lambda_{i2}$. 
We obtain
\begin{displaymath}
{\tilde d}_{TV}({\bf L}^{(3)}, {\bf L}^{(4)})
\le 
\sum_{2\le i\le m}
{\mathbb I}_i'(\lambda_{i1}+\lambda_{i2})\gamma_i^2
\le 
\varepsilon\sum_{2\le i\le m}
{\mathbb I}_i'(\lambda_{i1}+\lambda_{i2})\gamma_i
\le 
\varepsilon (Y_1+Y_2)b_1{\hat a}_2.
\end{displaymath}
Here we used inequality $\gamma_i^2\le \varepsilon\gamma_i$. It follows now that
\begin{displaymath}
d^{\star}_{TV}({\bf L}^{(3)}, {\bf L}^{(4)})
\le
{\E}^{\star}{\tilde d}_{TV}({\bf L}^{(3)}, {\bf L}^{(4)})
\le 
\varepsilon(Y_1+Y_2)b_1a_2.
\end{displaymath}


Let us prove (\ref{2014-11-1++1}).
For $A\subset W$ denote 
${\tilde S}(A)=\sum_{w_i\in A}{\tilde {\mathbb I}}_i$.
For $k=1,2$ we write ${\bar L}_k^{(4)}={\tilde S}({\bf A}_k)$, where
 ${\bf A}_k$ 
denote the set attributes from $W\setminus\{w_1\}$ that are prescribed to $v_k$,
i.e.,
${\bf A}_k =\{w_i\in W\setminus\{w_1\}:\, w_i\to v_k\}$.
Given $r_1,r_2\ge 0$ introduce events
${\cal D}_k=\{{\tilde S}({\bf A}_k)=r_k\}$, $k=1,2$, and 
${\cal D}=\{|{\bf A}_1\cap{\bf A}_2|\ge 1\}$.

We first show that 
\begin{equation}\label{2014-11-1+2}
 \PP^{\star}({\bar L}_1^{(4)}=r_1, {\bar L}_2^{(4)}=r_2)
=
\E^{\star}
\bigl(
 {\tilde \PP}({\bar L}_1^{(4)}=r_1)
 {\tilde \PP}({\bar L}_2^{(4)}=r_2)
\bigr)
+o(1).
\end{equation}
It is convenient to write (\ref{2014-11-1+2}) in the form
\begin{equation}\nonumber
\PP^{\star}({\cal D}_1\cap{\cal D}_2)
=
\E^{\star}
\bigl(
{\tilde \PP}({\cal D}_1)
{\tilde \PP}({\cal D}_2)
\bigr)
+
o(1).
\end{equation}
Now, we observe  that 
given $X,Y$, and ${\bf A}_1,{\bf A}_2$, satisfying 
${\bf A}_1\cap{\bf A}_2=\emptyset$,
the random variables ${\tilde S}({\bf A}_1)$ and  ${\tilde S}({\bf A}_2)$
are independent. Hence  for  ${\bf A}_1\cap{\bf A}_2=\emptyset$ we have
\begin{displaymath}
{\tilde \PP}\bigl({\cal D}_1\cap {\cal D}_2|{\bf A}_1, {\bf A}_2\bigr)
=
{\tilde \PP}({\cal D}_1|{\bf A}_1)
{\tilde \PP}({\cal D}_2|{\bf A}_2).
\end{displaymath}
This identity implies 
\begin{equation}\label{2014-11-1+1}
 {\tilde \PP}({\cal D}_1\cap {\cal D}_2\cap{\bar {\cal D}})
=
{\tilde \E}
\bigl(
{\tilde \PP}({\cal D}_1\cap {\cal D}_2|{\bf A}_1,{\bf A}_2)
{\bar {\mathbb I}}_{{\cal D}}
\bigr)
=
{\tilde \E}
\bigl({\tilde \PP}({\cal D}_1|{\bf A}_1)
{\tilde \PP}({\cal D}_2|{\bf A}_2)
{\bar {\mathbb I}}_{{\cal D}}
\bigr).
\end{equation}
Next, combining inequalities
\begin{eqnarray}\nonumber
&&
 0
\le 
{\tilde \PP}({\cal D}_1\cap {\cal D}_2)
-{\tilde \PP}({\cal D}_1\cap {\cal D}_2\cap{\bar {\cal D}})
\le 
{\tilde \PP}({\cal D}),
\\
\nonumber
&&
0
\le
{\tilde \E}
\bigl({\tilde \PP}({\cal D}_1|{\bf A}_1)
{\tilde \PP}({\cal D}_2|{\bf A}_2)
\bigr)
-
{\tilde \E}
\bigl({\tilde \PP}({\cal D}_1|{\bf A}_1)
{\tilde \PP}({\cal D}_2|{\bf A}_2)
{\bar {\mathbb I}}_{{\cal D}}
\bigr)
\le
{\tilde \PP}({\cal D})
\end{eqnarray}
with (\ref{2014-11-1+1}) and using the identity 
(which holds, since given $X,Y$, the 
random sets ${\bf A}_1$ and ${\bf A}_2$ are independent)
\begin{displaymath}
 {\tilde \E}
\bigl({\tilde \PP}({\cal D}_1|{\bf A}_1)
{\tilde \PP}({\cal D}_2|{\bf A}_2)
\bigr)
=
\bigl({\tilde \E}
{\tilde \PP}({\cal D}_1|{\bf A}_1)
\bigr)
\bigl({\tilde \E}
{\tilde \PP}({\cal D}_2|{\bf A}_2)
\bigr)
=
{\tilde \PP}({\cal D}_1)
{\tilde \PP}({\cal D}_2)
\end{displaymath}
we obtain that
\begin{displaymath}
\bigl|
\PP^{\star}({\cal D}_1\cap{\cal D}_2)
-
\E^{\star}
\bigl(
{\tilde \PP}({\cal D}_1)
{\tilde \PP}({\cal D}_2)
\bigr)
\bigr|
\le 
\E^{\star}{\tilde \PP}({\cal D}).
\end{displaymath}
It remains to show that  $\E^{\star}{\tilde \PP}({\cal D})=o(1)$.
To this aim we apply 
Markov's inequality
\begin{equation}
 {\tilde \PP}({\cal D})
\le 
\sum_{2\le i\le m}{\tilde \E} {\mathbb I}_{i1}{\mathbb I}_{i2}
\le  
\sum_{2\le i\le m}\E \lambda_{i1}\lambda_{i2}
=n^{-1}Y_1Y_2{\hat a}_2
\end{equation}
and obtain $\E^{\star}({\tilde \PP}({\cal D}))\le n^{-1}Y_1Y_2a_2=o(1)$ thus completing 
the proof of (\ref{2014-11-1+2}).

\smallskip

We secondly show that
\begin{equation}\label{2014-11-1+3}
\E^{\star}
\bigr|
 {\tilde \PP}({\bar L}_1^{(4)}=r_1)
 {\tilde \PP}({\bar L}_2^{(4)}=r_2)
-
{\tilde  \PP}({\bar L}_1^{(5)}=r_1)
{\tilde  \PP}({\bar L}_2^{(5)}=r_2)
\bigr|
\le 
\varepsilon(Y_1+Y_2)b_1a_2.
\end{equation}
Denote, for short, the integrand in (\ref{2014-11-1+3}) by
$|q_1q_2-h_1h_2|$.
Since $0\le q_1,q_2,h_1,h_2\le 1$, we have
$|q_1q_2-h_1h_2|
\le |q_1-h_1|+|q_2-h_2|$. We apply Le Cam's inequality (\ref{LeCam})  and obtain
\begin{equation}\nonumber
|q_k-h_k|
\le 
\sum_{2\le i\le m}{\mathbb I}'_i\lambda_{ik}\gamma_{i}^2
\le 
\varepsilon \sum_{2\le i\le m} {\mathbb I}'_i\lambda_{ik}\gamma_i
\le 
\varepsilon Y_1b_1{\hat a}_2,
\qquad
k=1,2.
\end{equation}
Here we estimate $\gamma_i^2\le \varepsilon\gamma_i$. Clearly,  inequality
$\E^{\star}|q_k-h_k|\le \varepsilon Y_kb_1a_2$ imply (\ref{2014-11-1+3}).

Finally, we show that
\begin{equation}\label{2014-11-1+4}
\E^{\star}
\bigl(
{\tilde \PP}({\bar L}_1^{(5)}=r_1)
{\tilde \PP}({\bar L}_2^{(5)}=r_2)
\bigr)
\to
\PP^{\star}(\Lambda_3=r_1)
\PP^{\star}(\Lambda_4=r_2).
\end{equation}
Here we will use the fact that the almost sure convergence ${\hat a}_2\to a_2$ 
implies the 
 convergence in probability 
\begin{equation}\label{2014-11-1+5}
 (\theta_1, \theta_2)\xrightarrow{P} (\lambda_3,\lambda_4).
\end{equation}
Since for $s,t\ge 0$ 
the function $(s,t)\to e^{-t-s}s^{r_1}t^{r_2}/(r_1!r_2!)$ 
is continuous and bounded, (\ref{2014-11-1+5}) implies the convergence 
of expected values
\begin{equation}\label{2014-11-1+6}
\E^{\star}
\frac{e^{-\theta_1}e^{-\theta_2}\theta_1^{r_1}\theta_2^{r_2}}{r_1!r_2!}
\to
\E^{\star}
\frac{e^{-\lambda_3}e^{-\lambda_4}\lambda_3^{r_1}\lambda_4^{r_2}}{r_1!r_2!}
=
\frac{e^{-\lambda_3}e^{-\lambda_4}\lambda_3^{r_1}\lambda_4^{r_2}}{r_1!r_2!}.
\end{equation}
We observe that the quantities on the left (right) sides of (\ref{2014-11-1+4}) and 
(\ref{2014-11-1+6}) are the same. Hence  (\ref{2014-11-1+6}) implies 
(\ref{2014-11-1+4}).

It remains to prove (\ref{2014-11-1+5}). 
Denote ${\hat \lambda}_{k+2}=Y_kb_1{\hat a}_2$, $k=1,2$.
We have for any $\delta>0$
\begin{equation}\label{2014-11-05+1}
 \PP^{\star}(|\theta_k-{\hat \lambda}_{2+k}|\ge \delta)
\le
\delta^{-1}\E^{\star}|\theta_k-{\hat \lambda}_{2+k}|
\le
\delta^{-1}Y_kb_1\E X_2^2(1-{\mathbb I}'_2)=o(1).
\end{equation}
In the last step we used $\E X_2^2(1-{\mathbb I}'_2)=o(1)$.
Now, (\ref{2014-11-1+5}) follows from 
(\ref{2014-11-05+1}) and the bounds ${\hat \lambda}_{k+2}-\lambda_{k+2}=o_P(1)$,
$k=1,2$, which are simple consequences of the fact that $\E X_1^2<\infty$ implies
${\hat a}_2-a_2=o_P(1)$.
\end{proof}

\section{Appendix B}


\begin{lem}\label{L3} 
 Let $c,\varkappa, h> 0$. Let  $Z,\Lambda_Z$ 
be non-negative random variables such that
\linebreak
$\PP(\Lambda_Z=r)=\E \bigl(e^{-Z}Z^r/r!\bigr)$, $r=0,1,\dots$.

(i) The relation $\PP(Z>t)=(c+o(1))t^{-\varkappa}$ as $t\to+\infty$ implies
\begin{equation}\label{2014-11-7++1} 
 \PP(\Lambda_Z>t)=(c+o(1))t^{-\varkappa}
\qquad
{\text{as}}
\quad
t\to+\infty.
\end{equation}

(ii) If  $Z\in {\cal P}_{c,\varkappa}$ 
then $\PP(\Lambda_Z=r)\approx cr^{-\varkappa}$.

(iii) If $hZ$ is integer valued and satisfies $\PP(hZ=r)\approx c(h/r)^{\varkappa}$ then
$\PP(\Lambda_Z=r)\approx c hr^{-\varkappa}$.
\end{lem}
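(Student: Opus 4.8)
The plan is to exploit that, conditionally on $Z$, the variable $\Lambda_Z$ is Poisson with mean $Z$, hence concentrates around $Z$ on the scale $\sqrt{Z}$; since a power law varies slowly on that scale relative to the scale $t$ (or $r$) of interest, the heavy-tailed asymptotics of $Z$ should transfer to $\Lambda_Z$ essentially unchanged. For part (i) I would write $\PP(\Lambda_Z>t)=\E\bigl(\PP(\mathrm{Poi}(Z)>t\mid Z)\bigr)$ and split the expectation according to whether $Z\le(1-\delta)t$, $(1-\delta)t<Z\le(1+\delta)t$, or $Z>(1+\delta)t$, for fixed small $\delta>0$. On the first region the Chernoff bound for the Poisson upper tail makes $\PP(\mathrm{Poi}(z)>t)$ super-polynomially small (uniformly, by stochastic monotonicity in $z$), hence $o(t^{-\varkappa})$; on the third region the same bound applied to the lower tail gives $\PP(\mathrm{Poi}(z)>t)=1-o(1)$ uniformly, so that block equals $(1-o(1))\PP(Z>(1+\delta)t)\sim c(1+\delta)^{-\varkappa}t^{-\varkappa}$; and the middle and third blocks together are at most $\PP\bigl(Z>(1-\delta)t\bigr)\sim c(1-\delta)^{-\varkappa}t^{-\varkappa}$. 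Dividing by $t^{-\varkappa}$, taking $\liminf$ and $\limsup$, and letting $\delta\downarrow0$ pinches $t^{\varkappa}\PP(\Lambda_Z>t)$ to $c$.

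For part (ii) I would instead use the local identity. The kernel $g_r(z):=e^{-z}z^r/r!$ is exactly the $\mathrm{Gamma}(r+1,1)$ density, a unimodal bump centred at $z=r$ of width $\asymp\sqrt{r}$ and height $\asymp r^{-1/2}$. In the absolutely continuous case $\PP(\Lambda_Z=r)=\int_0^\infty g_r(z)f_Z(z)\,dz=\E\,f_Z(G_r)$ with $G_r\sim\mathrm{Gamma}(r+1,1)$; I restrict to a window $z\in[r(1-\eta_r),r(1+\eta_r)]$ with $\eta_r\to0$ but $\eta_r\sqrt{r}\to\infty$, where $G_r$ carries mass $1-o(1)$ and where $f_Z(z)=(c+o(1))r^{-\varkappa}$ by slow variation, giving window contribution $(c+o(1))r^{-\varkappa}$. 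The region $z\ge r(1+\eta_r)$ is controlled by the decreasing bound $f_Z(z)\le 2cz^{-\varkappa}\le 2c\bigl(r(1+\eta_r)\bigr)^{-\varkappa}$ times the vanishing tail mass $\PP(G_r>r(1+\eta_r))$, and the region $z\le r(1-\eta_r)$ by the super-polynomial smallness of $g_r$ there (a Stirling estimate gives $g_r(\epsilon r)\le(2\pi r)^{-1/2}e^{r(1-\epsilon+\ln\epsilon)}$ for $\epsilon<1$) times $\int_0^{r(1-\eta_r)}f_Z\le1$. The integer case is identical with $\int$ replaced by $\sum_{k\ge0}$, using that $\sum_{k\ge0}g_r(k)\to1$ by an integral comparison for the unimodal $g_r$ (error $\le e^{-r}r^r/r!\asymp r^{-1/2}$ via Stirling).

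Part (iii) is the same computation carried out on the lattice $Z\in(1/h)\mathbb{Z}_{\ge0}$: the hypothesis $\PP(hZ=r)\approx c(h/r)^{\varkappa}$ means $\PP(Z=z)\approx cz^{-\varkappa}$ for $z\in(1/h)\mathbb{Z}_{\ge0}$. The only change from (ii) is that the Riemann sum $\sum_{z\in(1/h)\mathbb{Z}_{\ge0}}g_r(z)$ now has mesh $1/h$, so it converges to $h\int_0^\infty g_r(z)\,dz=h$ rather than to $1$; combined with the slow variation $\PP(Z=z)=(c+o(1))r^{-\varkappa}$ across the window this yields $\PP(\Lambda_Z=r)\approx chr^{-\varkappa}$.

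I expect the main obstacle to be the off-window bookkeeping in (ii) and (iii). The power-law form of $f_Z$ (resp.\ of the pmf of $Z$) is available only as the argument tends to infinity, so near the origin one cannot use it and must argue purely from integrability/total mass against the super-polynomial decay of the Gamma kernel; moreover one must pin down the total kernel mass ($\to1$, resp.\ $\to h$) accurately enough that it does not corrupt the leading constant, which is where the Stirling and integral-comparison estimates do the real work. By comparison the Chernoff pinching in (i) is routine.
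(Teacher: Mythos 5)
Your approach coincides with the paper's own proof in all essentials: for (i) a three-region split of $\E\,\PP(\mathrm{Poi}(Z)>t\mid Z)$ with Chernoff bounds for the Poisson upper and lower tails on the two outer regions (the paper takes a shrinking window half-width $\varepsilon=t^{-1/3}$ where you use fixed $\delta$ and pinch $\delta\downarrow0$ at the end, but the estimates are identical), and for (ii)--(iii) the local identity with the Gamma kernel $g_r(z)=e^{-z}z^r/r!$, a window of half-width $r\eta_r$ around $z=r$, Stirling bounds off the window, and a sum-versus-integral comparison that produces the kernel mass $1$ (respectively $h$ on the mesh-$1/h$ lattice in (iii), which the paper omits as ``similar to (ii)'').

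There is, however, one quantitative slip in your off-window bookkeeping for (ii), which (iii) inherits. You impose only $\eta_r\to0$ and $\eta_r\sqrt r\to\infty$, and then bound the contribution of $\{Z\le r(1-\eta_r)\}$ by $\sup_{z\le r(1-\eta_r)}g_r(z)$ times the total mass of $f_Z$, claiming super-polynomial smallness. But the supremum is attained at the window edge, where $g_r\bigl(r(1-\eta_r)\bigr)\asymp(2\pi r)^{-1/2}e^{-r\eta_r^2/2}$, and $r\eta_r^2\to\infty$ may hold arbitrarily slowly: for $\eta_r=r^{-1/2}\ln\ln r$ this gives $e^{-(\ln\ln r)^2/2}$, which is not $o(r^{-\varkappa})$ for any $\varkappa$. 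Super-polynomial decay needs $r\eta_r^2/\ln r\to\infty$. The fix is immediate: either fix the parameter as the paper does, $\eta_r=r^{-1/2}\ln r$, which makes the off-window kernel at most $r^{-0.3\ln r}$; or split the lower region into $[0,r/2]$, where $g_r$ is genuinely exponentially small, and $[r/2,r(1-\eta_r)]$, where the power law is already in force so you may play $f_Z(z)\le 2c(r/2)^{-\varkappa}$ against the vanishing kernel mass, exactly as you already do on the upper region. With that one adjustment your argument is complete and is essentially the paper's.
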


\begin{proof}[Proof of Lemma \ref{L3}]
 Let us prove (i). 
We first collect auxiliary inequalities.
For a Poisson random variable $\Lambda$ with mean 
$z>0$  and $0<s<z<t$  we have, see \cite{mitzenmacherANDupfal},
\begin{equation}\label{2014-11-8++2}
 \PP(\Lambda\ge t)\le e^{-z}(ez/t)^t,
\qquad
\PP(\Lambda\le s)\le e^{-z}(ez/s)^s.
\end{equation}
For $0<x<1$ and $y>1$ we have
\begin{equation}\label{2014-11-8++1}
\ln (1-x)
\le 
-x-0.5x^2,
\qquad
\ln (1+x)
\le
x-0.25x^2,
\qquad 
\ln(1+y)\le y\ln 2.
\end{equation}

In order to prove (\ref{2014-11-7++1}) we 
split the probability $\PP(\Lambda_Z>t)=P_1+P_2+P_3$, where 
\begin{eqnarray}\nonumber
&&
P_1=\PP(\Lambda_Z>t, Z<t_1),
\quad
P_2=\PP(\Lambda_Z>t, Z\in[t_1,t_2]),
\quad
P_3=\PP(\Lambda_Z>t, Z>t_2),
\\
\nonumber
&&
t_1=t(1-\varepsilon),
\qquad
t_2=t(1+\varepsilon),
\qquad
\varepsilon=t^{-1/3},
 \end{eqnarray}
and show that $P_2=(c+o(1))t^{-\varkappa}$ and $P_k=o(t^{-\varkappa})$, $k=1,2$.

We have 
\begin{displaymath}
P_2
\le
\PP(t_1\le Z\le t_2)
=(c+o(1))(t_1^{-\varkappa}-t_2^{-\varkappa})
=
o(t^{-\varkappa}).  
\end{displaymath}
Next, we estimate $P_1$. Given $z<t_1$ we  denote ${\bar z}=1-z/t$. Using 
(\ref{2014-11-8++2}), (\ref{2014-11-8++1}) we obtain
\begin{displaymath}
\PP(\Lambda_Z>t|Z=z)\le e^{-z}(ez/t)^t
=
e^{t-z+t\ln(1-{\bar z})}
\le
e^{t-z-t{\bar z}-0.5t{\bar z}^2}
=
e^{-0.5t{\bar z}^2}
\le 
e^{-0.5t\varepsilon^2}.
\end{displaymath}
Hence, $P_1=\E \bigl(\PP(\Lambda_Z>t|Z){\mathbb I}_{\{Z<t_1\}}\bigr)
\le 
e^{-0.5\varepsilon^2t}=o(t^{-\varkappa})$.
In order to evaluate $P_3$ we observe that $\PP(Z>t_2)=(c+o(1))t^{-\varkappa}$ 
and write
\begin{displaymath}
 P_3=\PP(Z>t_2)
-
\PP(\Lambda_Z\le t, Z>t_2).
\end{displaymath}
Then we estimate
\begin{displaymath}
\PP(\Lambda_Z\le t, Z>t_2)
=
\E \bigl( \PP(\Lambda_Z\le t|Z){\mathbb I}_{\{Z>t\}}\bigr)
=
o(t^{-\varkappa})
\end{displaymath} 
using the inequality
\begin{equation}\label{2014-11-8++3}
\PP(\Lambda_Z\le t|Z=z)
\le 
\max\bigl\{e^{-0.25\varepsilon^2t},e^{-t\ln (e/2)}\bigr\},
\qquad
\forall 
\
z>t_2.
\end{equation}
It remains  to prove (\ref{2014-11-8++3}).  
 We have, see  (\ref{2014-11-8++2}),
\begin{displaymath}
\PP(\Lambda_Z\le t|Z=z)
\le 
e^{-z}(ez/t)^t
=
e^{t-z+t\ln(1+y)}.
\end{displaymath}
Here $y=zt^{-1}-1$. 
For $0<y<1$ the quantity on the right is less than $e^{-0.25\varepsilon^2t}$,
by the second inequality of (\ref{2014-11-8++1}). For $y\ge 1$,
by the third inequality
of (\ref{2014-11-8++1}),
the quantity on the right is less than
$e^{-(z-t)(1-\ln 2)}\le e^{-t\ln (e/2)}$,
since $z-t=ty\ge t$.

\bigskip

Let us prove (ii). We only consider the case of integer valued $Z$. For an absolute continuous $Z$ proof is the same.
Denote  $\varepsilon_r =r^{-1/2}\ln r$. We split  
\begin{eqnarray}\nonumber
&&
  \PP(\Lambda_Z=r)=\E \bigl(e^{-Z}Z^r/r!\bigr)
=
J_1+J_2+J_3,
\qquad
J_k=\E \bigl({\mathbb I}_{\{Z\in Q_k\}}e^{-Z}Z^r/r!\bigr),
\\
\nonumber
&&
Q_1=[0,r(1-\varepsilon_r)), 
\qquad
Q_2=[r(1-\varepsilon_r), r(1+\varepsilon_r)],
\qquad
Q_3=(r(1+\varepsilon_r), +\infty),
\end{eqnarray}
 and show that
\begin{equation}\label{2014-11-7++3}
 J_1=o(r^{-\varkappa}), 
\qquad
J_3=o(r^{-\varkappa}),
\qquad
J_2=(c+o(1))r^{-\varkappa}
\qquad
{\text{as}}
\quad
r\to+\infty.
\end{equation}

In  the proof we  use the following bounds related to the integral 
representation of
Euler's Gamma function.
Given interval $Q\subset [0,+\infty)$, denote 
\begin{displaymath}
I_Q=\int_Q\frac{e^{-x}x^r}{r!}dx,
\qquad
S_Q= \sum_{j\in Q_k}\frac{e^{-j}j^r}{r!}.
\end{displaymath}
For large $r$ we have
\begin{equation}\label{2014-11-7++4}
 I_{Q_1}\le r^{-0.3\ln r}, 
\qquad
I_{Q_3}\le r^{-0.3\ln r},
\qquad
1-2r^{-0.3\ln r}\le I_{Q_2}\le 1.
\end{equation}
Relations (\ref{2014-11-7++3}) follow from (\ref{2014-11-7++4}) and the 
approximation $S_{Q_k}=(1+o(1))I_{Q_k}$, $k=1,2,3$,
\begin{eqnarray}\nonumber
 J_k
&=&
\sum_{j\in Q_k}\frac{e^{-j}j^r}{r!}\PP(Z=r)
\le
 S_{Q_k}
=
(1+o(1))I_{Q_k},
\qquad
k=1,3,
\\
\nonumber
J_2
&=&
\sum_{j\in Q_2}\frac{e^{-j}j^r}{r!}\PP(Z=r)
=
\frac{c+o(1)}{r^{\varkappa} }S_{Q_2}
=
\frac{c+o(1)}{r^{\varkappa}}(1+o(1))
I_{Q_2}.
\end{eqnarray}

For reader's convenience we provide a proof of (\ref{2014-11-7++4}).
We note that the third relation of (\ref{2014-11-7++4}) follows from the 
first two and the well known fact that 
$I_{[0,+\infty)}=1$. 
In the proof, for sufficiently small $\varepsilon>0$, we apply 
the inequalities, which follow by Taylor's expansion,
\begin{displaymath}
e^{\varepsilon}(1-\varepsilon)\le 1-\varepsilon^2/3,
\qquad
e^{-\varepsilon}(1+\varepsilon)\le 1-\varepsilon^2/3.
\end{displaymath}

Since the function $x\to e^{-x}x^r$
increases on $Q_1$, we have  $I_{Q_1}\le ze^{-z}z^r/r!$, where 
$z=r(1-\varepsilon_r)$ is the right end-point of $Q_1$.
Using Stirling's formula we obtain for large $r$
\begin{displaymath}
 ze^{-z}z^r/r!
=
(1+o(1))
e^{r\varepsilon_r}(1-\varepsilon_r)^{r+1}(r/2\pi)^{1/2}
\le 
r^{1/2}(1-\varepsilon_r^2/3)^r
\le 
r^{-0.3\ln r}.
\end{displaymath}
In the last step we used $1-\varepsilon\le e^{-\varepsilon}$. 
Furthermore, for $y=r(1+\varepsilon_r)$ we have
decreases on $Q_3$ we have 
\begin{equation}\label{2014-11-7++5}
 I_{Q_3}
\le 
\frac{e^{-y}y^{r}}{r!}
\int_{y}^{+\infty}e^{-(x-y)}\Bigl(1+\frac{x-y}{y}\Bigr)^rdx.
\end{equation}
Using Stirling's formula we obtain 
\begin{displaymath}
e^{-y}y^r/r!
=
(1+o(1))
\bigl(e^{-\varepsilon_r}(1+\varepsilon_r)\bigr)^r (2\pi r)^{-1/2}
\le 
r^{-1/2}(1-\varepsilon_r^2/3)^r
\le 
r^{-0.5-0.3\ln r}.
\end{displaymath}
Combining this inequality with the following upper 
bound on the integral of (\ref{2014-11-7++5})
\begin{displaymath}
 \int_0^{+\infty}e^{-t}(1+t/y)^rdt
\le
 \int_0^{+\infty}e^{-t}e^{(rt/y)}dt
=
1+\varepsilon_r^{-1}\le r^{1/2}
\end{displaymath}
we obtain (\ref{2014-11-7++4}). Here we used the inequality 
$(1+t/y)^r\le e^{(rt/y)}$.

We omit the proof of (iii) since it is similar to that of (ii).
\end{proof}

{\it Acknowledgement.} The present study was motivated by a question of
Konstantin Avra\-tchen\-kov  about the  degree-degree distribution 
in a random intersection graph.  A discussion with  Konstantin Avratchenkov
and Jerzy Jaworski about
the influence of the clustering property on the degree-degree distribution 
was the starting point of this research.

\end{document}